\documentclass[12pt, reqno]{amsart}

\allowdisplaybreaks
\usepackage{amssymb,amsthm,amsmath,bm}
\usepackage[numbers,sort&compress]{natbib}

\linespread{1.2}

\setlength{\textheight}{9in} \setlength{\textwidth}{6.2in}
\setlength{\oddsidemargin}{0.2in} \setlength{\evensidemargin}{0.2in}
\setlength{\parindent}{0.2in}
\setlength{\topmargin}{-0.02in} \setcounter{section}{0}
\setcounter{figure}{0} \setcounter{equation}{0}

\title[2D Micropolar Equations]{Initial-boundary value problem for 2D micropolar equations without angular viscosity}
\author{Jitao Liu, \, Shu Wang}
\address[Jitao Liu]{College of Applied Sciences, Beijing University of Technology, Beijing, 100124, P. R. China.}
\email{jtliu@bjut.edu.cn,\,\,\,jtliumath@qq.com}
\address[Shu Wang] {College of Applied Sciences, Beijing University of Technology, Beijing, 100124, P. R. China.}
\email{wangshu@bjut.edu.cn}


\keywords{Initial-boundary value problem, 2D micropolar equations, partial viscosity}
\thanks{{\em 2010 Mathematics Subject Classification.} 35Q35, 76D03.}

\theoremstyle{plain}
\newtheorem{corollary}{Corollary}[section]
\newtheorem{theorem}{Theorem}[section]
\newtheorem{lemma}{Lemma}[section]
\newtheorem{proposition}{Proposition}[section]

\theoremstyle{definition}
\newtheorem{definition}{Definition}[section]

\let\f=\frac
\let\p=\partial
\def\R{\mathbb R}

\newcommand{\beq}{\begin{equation}}
\newcommand{\eeq}{\end{equation}}
\newcommand{\ben}{\begin{eqnarray}}
\newcommand{\een}{\end{eqnarray}}
\newcommand{\beno}{\begin{eqnarray*}}
\newcommand{\eeno}{\end{eqnarray*}}

\begin{document}

\begin{abstract}
This paper concerns the initial-boundary value problem
to 2D micropolar equations without angular
viscosity in a smooth bounded domain. It is shown that
such a system admits a unique and global weak solution.
The main idea of this paper is to fully exploit the structure
of this system and establish high order estimates
via introducing an auxiliary field which is at the energy level
of one order lower than micro-rotation.
\end{abstract}
\maketitle

\section{Introduction and main results}
\label{intro}
\setcounter{equation}{0}

This paper is devoted to the initial-boundary value problem to the two-dimensional (2D) micropolar equations without angular viscosity. The micropolar equations were introduced in 1965 by C.A. Eringen to model micropolar fluids (see, e.g., \cite{Er}). Micropolar fluids are fluids with microstructure. Certain anisotropic fluids, e.g. liquid crystals which are made up of dumbbell molecules, are of this type. The standard 3D incompressible micropolar equations are given by
\begin{equation}\label{3D}
\left\{\begin{array}{ll}
{\mathbf u}_t-(\nu+\kappa)\Delta {\mathbf u} + {\mathbf u}\cdot\nabla {\mathbf u} +\nabla\pi=2\kappa\nabla\times {\mathbf w},\\
{\mathbf w}_t-\gamma\Delta {\mathbf w}+4\kappa {\mathbf w}-\mu\nabla\nabla \cdot {\mathbf w}+{\mathbf u}\cdot\nabla {\mathbf w}=2\kappa\nabla\times {\mathbf u},\\
\nabla\cdot {\mathbf u}=0,
\end{array}\right.
\end{equation}
where ${\mathbf u}={\mathbf u}(x,t)$ denotes the fluid velocity, $\pi(x,t)$ the scalar pressure, ${\mathbf w}(x,t)$ the micro-rotation field, and the parameter $\nu$ represents the Newtonian kinematic viscosity, $\kappa$ the micro-rotation viscosity, $\gamma$ and $\mu$ the angular viscosities.

\vskip .1in
Roughly speaking, they belong to a class of non-Newtonian fluids with nonsymmetric stress tensor (called polar fluids) and include, as a special case, the classical fluids modeled by the Navier-Stokes equations. In fact,  when micro-rotation effects are neglected, namely $w = 0$, (\ref{3D}) reduces to the incompressible Navier-Stokes equations. The micropolar equations are significant generalizations of the Navier-Stokes equations and cover many more phenomena such as fluids consisting of particles suspended in a viscous medium. The micropolar equations have been extensively applied and studied by many engineers and physicists.

\vskip .1in
Because of their physical applications and mathematical significance,
the well-posedness problem on the
micropolar equations have attracted considerable attention recently
from the community of mathematical fluids \cite{CP,DC,Er2,GLuk}.
Lukaszewicz in his monograph \cite{GLuk} studied the well-posedness
problem on the 3D stationary model as well as the time-dependent
micropolar equations. In spite of previous progress on
the 3D case, just like the 3D Navier-Stokes equations, the problem of global regularity or finite time singularity
for strong solutions of the 3D micropolar fluid is still widely open. Therefore, more attention is focused on the 2D micropolar equations, which are a special case of the 3D micropolar equations.
In the special case when
$$
{\mathbf u}=(u_1(x_1,x_2, t), u_2(x_1, x_2, t), 0),\,\pi = \pi(x_1, x_2, t),\,{\mathbf w} = (0, 0, w(x_1, x_2, t)),
$$
the 3D micropolar equations reduce to the 2D micropolar equations,
\begin{equation}\label{2D}
\left\{\begin{array}{ll}
{\mathbf u}_t-(\nu+\kappa)\Delta {\mathbf u}+{\mathbf u}\cdot\nabla {\mathbf u}+\nabla\pi=-2\kappa\nabla^{\perp} w,\\
{w}_t-\gamma\Delta w+4\kappa w+{\mathbf u}\cdot\nabla w=2\kappa\nabla^{\perp}\cdot {\mathbf u},\\
\nabla\cdot {\mathbf u}=0.
\end{array}\right.
\end{equation}
Here ${\mathbf u} = (u_1, u_2)$ is a 2D vector with the corresponding scalar vorticity $\Phi$ given by
$$\Phi\equiv {\nabla}^{\perp}\cdot {\mathbf u}=\p_1{u_2}-\p_2{u_1},$$
while $\omega$ represents a scalar function with
$$
{\nabla^{\perp}}w =(-\p_2w, \p_1w).
$$

\vskip .1in
In \cite{DC2}, Dong and Chen obtained the
global existence and uniqueness, and sharp algebraic time decay
rates for the 2D micropolar equations (\ref{2D}). Despite all this, the
global regularity problem for the inviscid equation is currently
out of reach. Therefore, more recent efforts are focused on the 2D micropolar equation with
partial viscosity, which naturally bridge the inviscid micropolar
equation and the micropolar equation with full viscosity. In \cite{DZ}, Dong and Zhang examined (\ref{2D}) with the micro-rotation viscosity $\gamma=0$ and established the global regularity. Another partial viscosity case, \eqref{2D} with $\nu= 0,\, \gamma>0,\, \kappa> 0$ and $\kappa\neq\gamma$, was examined by Xue, who was able to obtain the global well-posedness in the frame work of Besov spaces \cite{LX}. Recently, Dong, Li and Wu took on the case when \eqref{2D} involves only the angular viscosity \cite{DLW}, in which they proved the global (in time) regularity.

\vskip .1in
Most of the results we mentioned above are for the whole
space $\mathbb{R}^2$ or $\mathbb{R}^3$. In many real-world
applications, the flows are often restricted to bounded domains
with suitable constraints imposed on the boundaries and these
applications naturally lead to the studies of the initial-boundary value problems. In addition, solutions of the initial-boundary value problems may exhibit much richer phenomena than those of the whole
space counterparts. Up to now, the case when $\nu>0,\, \kappa> 0$ and $\gamma>0\,$ has been extensively analyzed by \cite{SP} for 2D case with periodic boundary conditions and \cite{YN} for 3D case with small initial data respectively.

\vskip .1in
However, the progress on initial-boundary value problem for \eqref{2D} with partial viscosity is quite limited. For the case with only the angular viscosity, it has been solved by Jiu, Liu, Wu and Yu in \cite{JLWY}. While, the initial-boundary value problem for the opposite case, namely
\begin{equation}\label{eq1}
\left\{\begin{array}{ll}
{\mathbf u}_t-(\nu+\kappa)\Delta {\mathbf u} + {\mathbf u}\cdot\nabla {\mathbf u} +\nabla\pi=-2\kappa\nabla^{\perp} w,\\
{w}_t+4\kappa w+{\mathbf u}\cdot\nabla w=2\kappa\nabla^{\perp}\cdot {\mathbf u},\\
\nabla\cdot {\mathbf u}=0,
\end{array}\right.
\end{equation}
is still open. In this paper, we investigate the initial-boundary value problem of system \eqref{eq1} with the natural boundary condition
\begin{equation}\label{eq2}
{\mathbf u}|_{\p \Omega}=0
\end{equation}
and the initial condition
\begin{equation}\label{eq20}
({\mathbf u},w)(x,0)=({\mathbf u}_0,w_0)(x),\quad\,\hbox{in}\,\,\Omega,\\
\end{equation}
where $\Omega\subset \R^2$ represents a bounded domain with smooth boundary. Besides, we also impose the following compatibility conditions
\begin{equation}\label{eq3}
\left\{\begin{array}{ll}
{\mathbf u}_0|_{\p \Omega}=0,\,\,\nabla\cdot {\mathbf u}_0=0,&\\
-(\nu+\kappa)\Delta {\mathbf u}_0+{\mathbf u}_0\cdot\nabla {\mathbf u}_0+\nabla\pi_0=-2\kappa{\nabla^{\perp}}w_0,\quad&\hbox{on}\,\,\p \Omega,
\end{array}\right.
\end{equation}
where $\pi_0$ is determined by the Neumann boundary problem
\begin{equation}\label{eq4}
\left\{\begin{array}{ll}
\Delta\pi_0=-\nabla\cdot[{\mathbf u}_0\cdot\nabla {\mathbf u}_0],\quad&\hbox{in}\,\,\Omega,\\
\nabla \pi_0\cdot{\bm n}=[(\nu+\kappa)\Delta {\mathbf u}_0-2\kappa{\nabla^{\perp}}w_0-{\mathbf u}_0\cdot\nabla {\mathbf u}_0]\cdot{\bm n},\quad&\hbox{on}\,\,\p \Omega.
\end{array}\right.
\end{equation}

\vskip .1in
Our goal here is to establish the global existence and
uniqueness of weak solutions to \eqref{eq1}-\eqref{eq20} by given the
least regularity assumptions on the initial data, and obtain the following result.

\begin{theorem}\label{T1}
Let $\Omega\subset\R^2$ be a bounded domain with smooth boundary. Assume
$({\mathbf u}_0, w_0)$ satisfies
$$
{\mathbf u}_0\in H^2(\Omega), \quad w_0\in W^{1,4}(\Omega)
$$
and the compatibility conditions \eqref{eq3} and \eqref{eq4}. Then \eqref{eq1}-\eqref{eq20} has a unique global smooth solution $({\mathbf u},w)$
satisfying
\ben
{\mathbf u}\in L^\infty(0,T;H_0^1(\Omega))\cap L^2(0,T;W^{2,4}(\Omega)),\quad  w\in L^\infty(0,T;W^{1,4}(\Omega)) \label{regclass}
\een
for any $T>0$.
\end{theorem}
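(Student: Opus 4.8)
The plan is to construct the solution by a parabolic regularization (adding an artificial angular viscosity $\varepsilon\Delta w$ to the second equation of \eqref{eq1}, for which the fully viscous system is globally well posed), to derive a priori bounds uniform in $\varepsilon$, and then to pass to the limit $\varepsilon\to0$. Throughout, the essential difficulty is that $w$ obeys only a damped transport equation, so its gradient enjoys no smoothing, while the coupling $-2\kappa\nabla^{\perp}w$ in the velocity equation sits at the same differential order as $\Delta{\mathbf u}$: a direct $H^1$-estimate of ${\mathbf u}$ pits $\|\nabla w\|$ against $\|\Delta{\mathbf u}\|$, and the two dissipation budgets cancel. The whole point is to break this derivative-losing loop with the auxiliary field.

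First I would record the basic energy estimate. Testing the velocity equation with ${\mathbf u}$ and the $w$-equation with $w$, integrating by parts (using $\int\nabla^{\perp}w\cdot{\mathbf u}=-\int w\,\Phi$ together with ${\mathbf u}|_{\p\Omega}=0$), the two couplings combine into $4\kappa\int\Phi w$, which is handled by Young's inequality against the velocity dissipation; a Gronwall argument then yields ${\mathbf u}\in L^\infty_tL^2\cap L^2_tH^1$ and $w\in L^\infty_tL^2$ with bounds growing in $T$. Since the transport term vanishes by incompressibility, an $L^p$-estimate on the $w$-equation additionally gives $\|w\|_{L^\infty_tL^4}\lesssim\|w_0\|_{L^4}+\int_0^t\|\nabla{\mathbf u}\|_{L^4}$.

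The heart of the argument is the auxiliary field. Let ${\mathbf Z}$ solve the stationary Stokes problem $-\Delta{\mathbf Z}+\nabla q=\nabla^{\perp}w$, $\nabla\cdot{\mathbf Z}=0$ in $\Omega$, ${\mathbf Z}|_{\p\Omega}=0$; by elliptic regularity ${\mathbf Z}$ is one derivative more regular than $w$, and crucially $\Delta{\mathbf Z}=\nabla q-\nabla^{\perp}w$. Setting ${\mathbf U}={\mathbf u}+\frac{2\kappa}{\nu+\kappa}{\mathbf Z}$, the velocity equation becomes
\beq
{\mathbf U}_t-(\nu+\kappa)\Delta{\mathbf U}+{\mathbf u}\cdot\nabla{\mathbf u}+\nabla\tilde\pi=\frac{2\kappa}{\nu+\kappa}{\mathbf Z}_t,
\eeq
with ${\mathbf U}|_{\p\Omega}=0$ and $\nabla\cdot{\mathbf U}=0$. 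The singular forcing $\nabla^{\perp}w$ has been absorbed into the dissipation, and the replacement forcing ${\mathbf Z}_t$ is one differential order lower: differentiating the Stokes problem in time gives $\|{\mathbf Z}_t\|\lesssim\|w_t\|_{H^{-1}}\lesssim\|w\|_{L^2}+\|{\mathbf u}\|_{L^2}+\|{\mathbf u}\|_{L^4}\|w\|_{L^4}$, i.e. it is controlled by quantities already bounded in the previous step and does \emph{not} require differentiating $w$. I would then run the $H^1$-estimate on ${\mathbf U}$ by testing with $-\Delta{\mathbf U}$ (no boundary terms arise, since ${\mathbf U}$ vanishes on $\p\Omega$), where the formerly fatal coupling is now the harmless $\int{\mathbf Z}_t\cdot\Delta{\mathbf U}$, absorbable into $\|\Delta{\mathbf U}\|^2$. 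Coupling this with the $L^4$-bound on $w$ and invoking Gronwall yields ${\mathbf u}\in L^\infty_tH^1\cap L^2_tH^2$.

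Finally I would bootstrap to the stated regularity, and this is where I expect the main obstacle. With ${\mathbf u}\in L^2_tH^2$ one controls $\int_0^T\|\nabla{\mathbf u}\|_{L^\infty}\dd t$, and differentiating the $w$-equation and testing with $|\nabla w|^2\nabla w$ gives $\frac{d}{dt}\|\nabla w\|_{L^4}\lesssim\|\nabla{\mathbf u}\|_{L^\infty}\|\nabla w\|_{L^4}+\|\nabla^2{\mathbf u}\|_{L^4}$, while Stokes regularity for the velocity equation gives $\|{\mathbf u}\|_{W^{2,4}}\lesssim\|{\mathbf u}_t\|_{L^4}+\|{\mathbf u}\cdot\nabla{\mathbf u}\|_{L^4}+\|\nabla w\|_{L^4}$. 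These two inequalities are coupled at exactly the same order, so the decisive step is to close them \emph{simultaneously} by Gronwall, exploiting that the feedback coefficient $\int_0^T\|\nabla{\mathbf u}\|_{L^\infty}\dd t$ is finite and that the $W^{2,4}$-gain of the Stokes operator offsets the no-gain transport of $\nabla w$; this produces $w\in L^\infty_tW^{1,4}$ and ${\mathbf u}\in L^2_tW^{2,4}$. All bounds being $\varepsilon$-uniform, standard compactness yields a solution in the asserted class, and uniqueness follows from a difference estimate in which the same auxiliary field removes the $\nabla^{\perp}(\delta w)$ coupling, closing a Gronwall inequality for $\|\delta{\mathbf U}\|_{L^2}^2+\|\delta w\|_{L^2}^2$.
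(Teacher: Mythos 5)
Your auxiliary field is, up to normalization, exactly the device the paper uses: your ${\mathbf Z}=A^{-1}\nabla^{\perp}w$ and shifted unknown ${\mathbf U}$ coincide with the paper's ${\mathbf v}$ and ${\mathbf g}$, your identification of the new forcing $(2\kappa/(\nu+\kappa)){\mathbf Z}_t$ with quantities controlled by $\|w\|_{L^2}$, $\|{\mathbf u}\|_{L^2}$ and $\|{\mathbf u}\|_{L^4}\|w\|_{L^4}$ (via $w_t$ expressed through the equation, with ${\mathbf u}\cdot\nabla w=\nabla\cdot({\mathbf u}w)$) is precisely the paper's ${\mathbf Q}$, and the simultaneous Gronwall closure of $\|\nabla{\mathbf U}\|_{L^2}^2+\|w\|_{L^4}^2$ is the paper's Lemma 3.1. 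The basic energy estimate and the uniqueness argument are also essentially the paper's (in fact the paper's uniqueness does not even need the auxiliary field: the two coupling terms combine into $4\kappa\int\nabla^{\perp}\cdot{\mathbf U}\,W$ directly).

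The genuine gap is in your final bootstrap. First, the assertion that ${\mathbf u}\in L^2(0,T;H^2(\Omega))$ controls $\int_0^T\|\nabla{\mathbf u}\|_{L^\infty}\,dt$ is false in two dimensions: $H^2(\Omega)\not\hookrightarrow W^{1,\infty}(\Omega)$, so the coefficient you rely on is not yet finite at that stage. Second, once you try to supply $\|\nabla{\mathbf u}\|_{L^\infty}$ from the $W^{2,4}$ Stokes estimate, the bound $\|\nabla{\mathbf u}\|_{L^\infty}\lesssim\|{\mathbf u}\|_{W^{2,4}}\lesssim\cdots+\|\nabla w\|_{L^4}$ feeds back into $\frac{d}{dt}\|\nabla w\|_{L^4}\lesssim\|\nabla{\mathbf u}\|_{L^\infty}\|\nabla w\|_{L^4}+\|\nabla^2{\mathbf u}\|_{L^4}$ and produces a Riccati-type inequality $y'\lesssim a(t)y+y^2$, which does not close globally; ``closing them simultaneously by Gronwall'' is exactly the step that fails. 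The paper resolves this with two ingredients you are missing: (i) a maximal-regularity estimate giving ${\mathbf g}\in L^2(0,T;W^{2,q}(\Omega))$ for some $q>2$ \emph{independently of} $\nabla w$, because the forcing ${\mathbf Q}$ involves only ${\mathbf u}$, $w$ and ${\mathbf u}w$ (this requires the intermediate $L^\infty_t L^q_x$ bounds on $w$); and (ii) a logarithmic gradient estimate for the Stokes part, $\|\nabla{\mathbf v}\|_{L^\infty}\lesssim(1+\|w\|_{L^\infty})\ln(e+\|\nabla w\|_{L^q})$, which degrades the quadratic feedback to $y\ln(e+y)$ and lets an Osgood/log-Gronwall argument close. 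Without (ii), or an equivalent device, your last step does not go through. A secondary, more repairable point: regularizing with $\varepsilon\Delta w$ forces you to impose an artificial boundary condition on $w$ and to rule out boundary layers as $\varepsilon\to0$; the paper sidesteps this by building approximations from a mollified linearized iteration and Schauder's fixed point.
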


\vskip .1in
We remark that the initial-boundary value problem on \eqref{eq1} is not trivial and quite different from the Cauchy problem. The difficulty is due to the dynamic micro-rotational term $\nabla^{\perp}w$ in the velocity equation, which prevents us to obtain any high order estimates except the basic energy estimate.
For the Cauchy problem, there is no boundary conditions and therefore the equation of vorticity $\Phi$
$${\Phi}_t-(\nu+\kappa)\Delta {\Phi} + {\mathbf u}\cdot\nabla {\Phi}+2\kappa\Delta w=0$$
is available. To overcome this difficulty, the authors in \cite{DZ} observe that the sum of the vorticity and micro-rotation angular velocity
$$Z=\Phi-\f{2\kappa}{\nu+\kappa}w$$
satisfies the transport-diffusion equation
$$\p_tZ-(\nu+\kappa)\Delta Z+{\mathbf u}\cdot \nabla Z =\left(\f{8{\kappa}^2}{\nu+\kappa}-\f{8{\kappa}^3}{(\nu+\kappa)^2}\right)w-\f{4{\kappa}^2}{\nu+\kappa}Z,$$
which helps them to obtain the global bound $\|\Phi(t)\|_{L^{\infty}({\R}^2)}$ via the global bound of $\|Z(t)\|_{L^{\infty}({\R}^2)}$, and therefore establish the desired high order estimates.

\vskip .1in
However, for the initial-boundary value problem, this method does not work. This is due to the presence of
no-slip boundary condition for $\mathbf u$, and hence the transport-diffusion equation satisfied by $\Phi$
and $Z$ would not work any more. To overcome the difficulty caused by the term $\nabla^{\perp}w$, our strategy is to
utilize an auxiliary field $\mathbf v$ which is at the energy level of one order lower than $w$ and with appropriate
boundary condition. Keep this in mind, we then introduce the vector field ${\mathbf v}=-\f{2\kappa}{\nu+\kappa}A^{-1}{\nabla}^{\perp}w$
be the unique solution of the stationary Stokes system with source term $-\f{2\kappa}{\nu+\kappa}{\nabla}^{\perp}w$
\begin{equation}\label{stokesv}
\left\{\begin{array}{ll}
-\Delta {\mathbf v}+\nabla {\pi}=-\f{2\kappa}{\nu+\kappa}{\nabla}^{\perp}w\quad&\hbox{in}\,\,\Omega,\\
\nabla\cdot {\mathbf v}=0\quad&\hbox{in}\,\,\Omega,\\
{\mathbf v}=0\quad&\hbox{on}\,\,\p{\Omega}.
\end{array}\right.
\end{equation}
Thanks to \eqref{stokesv}, it is clear to deduce the field $\mathbf v$ also solves, after taking the operator $A^{-1}\nabla^{\perp}$ on $\eqref{eq1}^2$, that
\ben\label{vt}
\p_t{\mathbf v}+4\kappa{\mathbf v}-2\kappa A^{-1}\nabla^{\perp}(\nabla^{\perp}\cdot {\mathbf u})+A^{-1}\nabla^{\perp}({\mathbf u}\cdot\nabla{w})=0.
\een

\vskip .1in
On this basis, we further discover that the new field ${\mathbf g}={\mathbf u}-(\nu+\kappa){\mathbf v}$ satisfies the system
\begin{equation}\label{mathg}
\left\{\begin{array}{ll}
\p_t{\mathbf g}-(\nu+\kappa)\Delta {\mathbf g}+\nabla p={\mathbf Q}\quad&\hbox{in}\,\,\Omega,\\
\nabla\cdot {\mathbf g}=0\quad&\hbox{in}\,\,\Omega,\\
{\mathbf g}=0\quad&\hbox{on}\,\,\p{\Omega},
\end{array}\right.
\end{equation}
according to \eqref{eq1}, \eqref{stokesv} and \eqref{vt},
where ${\mathbf Q}=-{\mathbf u}\cdot\nabla{\mathbf u}-A^{-1}\nabla^{\perp}({\mathbf u}\cdot\nabla{w})+2\kappa A^{-1}\nabla^{\perp}(\nabla^{\perp}\cdot {\mathbf u})-4\kappa{\mathbf v}$. The obvious advantage of doing so lies in that it provides the cornerstone of establishing $H^1$-norm estimates of velocity $\mathbf u$. As a result, after noticing that $\mathbf v$ is at the energy level of one order lower than $w$ and some careful {\it a priori} estimates for $\mathbf g$
, we can successfully establish the desired high order estimates, which guarantees the global existence and uniqueness of weak solutions to the system \eqref{eq1}-\eqref{eq20}.

\vskip .1in
The rest of this paper is divided into four sections. The second
section serves as a preparation and presents a list of facts
and tools for bounded domains such as embedding inequalities and logarithmic type interpolation
inequalities.  Section \ref{apriori} establishes the {\it a priori} estimates,  which is necessary in the proof of Theorem \ref{T1}.
Section \ref{proofT1} completes the
proof of Theorem \ref{T1}.

\vskip .3in
\section{Preliminaries}
\label{prel}
\setcounter{equation}{0}

\vskip .1in
This section serves as a preparation. We list a few basic tools
for bounded domains to be used in the subsequent sections. In particular,
we provide the Gagliardo-Nirenberg type inequalities, the logarithmic
type interpolation inequalities and regularization estimates for elliptic equations and Stokes system in bounded domains. These estimates will also be
handy for future studies on PDEs in bounded domains.

\vskip .1in
We start with the well-known Gagliardo-Nirenberg inequality for bounded
domains (see, e.g., \cite {NIR}).
\begin{lemma}\label{P1}
Let $\Omega\subset\R^n$ be a bounded domain with smooth boundary. Let
$1\leq p, q, r \leq\infty$ be real numbers and $j\le m$ be non-negative
integers. If a real number $\alpha$ satisfies
$$\frac{1}{p} - \frac{j}{n} = \alpha\,\left( \frac{1}{r} - \frac{m}{n} \right) + (1 - \alpha)\frac{1}{q}, \qquad \frac{j}{m} \leq \alpha \leq 1,
$$
then
$$\| \mathrm{D}^{j} f \|_{L^{p}(\Omega)} \leq C_{1} \| \mathrm{D}^{m} f \|_{L^{r}(\Omega)}^{\alpha} \| f \|_{L^{q}(\Omega)}^{1 - \alpha} + C_{2} \|f\|_{L^{s}(\Omega)},$$
where $s > 0$, and the constants $C_1$ and $C_2$ depend upon $\Omega$ and the indices $p,q,r,m,j,s$ only.
\end{lemma}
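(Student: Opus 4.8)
The plan is to reduce the estimate on the bounded domain $\Omega$ to the classical Gagliardo--Nirenberg inequality on the whole space $\R^n$ by means of a bounded extension operator. Since $\p\Omega$ is smooth, Stein's total extension theorem provides a single bounded linear operator $E\colon W^{k,p}(\Omega)\to W^{k,p}(\R^n)$, valid simultaneously for all integers $0\le k\le m$ and all $1\le p\le\infty$, such that $(Ef)|_\Omega=f$ and $\|Ef\|_{W^{k,p}(\R^n)}\le C\|f\|_{W^{k,p}(\Omega)}$ with $C=C(\Omega,k,p)$. Writing $\tilde f=Ef$, the whole-space (homogeneous) Gagliardo--Nirenberg inequality yields, under the stated relation between the exponents and the range $\tfrac{j}{m}\le\alpha\le1$,
$$\|D^j\tilde f\|_{L^p(\R^n)}\le C\,\|D^m\tilde f\|_{L^r(\R^n)}^{\alpha}\,\|\tilde f\|_{L^q(\R^n)}^{1-\alpha}.$$

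Next I would restrict back to $\Omega$ and feed in the extension bounds. Because $\|D^j f\|_{L^p(\Omega)}\le\|D^j\tilde f\|_{L^p(\R^n)}$, $\|\tilde f\|_{L^q(\R^n)}\le C\|f\|_{L^q(\Omega)}$ and $\|D^m\tilde f\|_{L^r(\R^n)}\le C\|f\|_{W^{m,r}(\Omega)}$, this gives
$$\|D^j f\|_{L^p(\Omega)}\le C\,\|f\|_{W^{m,r}(\Omega)}^{\alpha}\,\|f\|_{L^q(\Omega)}^{1-\alpha}.$$
The main obstacle --- and the reason the additive term $C_2\|f\|_{L^s(\Omega)}$ appears --- is that the extension controls only the full Sobolev norm $\|f\|_{W^{m,r}(\Omega)}=\sum_{i=0}^m\|D^i f\|_{L^r(\Omega)}$, whereas the assertion features solely the top-order quantity $\|D^m f\|_{L^r(\Omega)}$. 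The intermediate derivatives $D^i f$ with $0\le i<m$ must therefore be reabsorbed.

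To dispose of them I would establish the interpolation-with-small-parameter estimate
$$\|D^i f\|_{L^r(\Omega)}\le\varepsilon\,\|D^m f\|_{L^r(\Omega)}+C_\varepsilon\,\|f\|_{L^s(\Omega)},\qquad 0\le i<m,$$
for every $\varepsilon>0$. This can be proved independently of the full inequality by a contradiction argument built on the Rellich--Kondrachov compact embedding $W^{m,r}(\Omega)\hookrightarrow\hookrightarrow W^{i,r}(\Omega)$, so no circularity arises. Substituting this bound, choosing $\varepsilon$ small, and splitting the resulting products by Young's inequality lets me absorb the contribution of $\|D^m f\|_{L^r(\Omega)}$ into the leading term $C_1\|D^m f\|_{L^r(\Omega)}^{\alpha}\|f\|_{L^q(\Omega)}^{1-\alpha}$, while the $\|f\|_{L^s(\Omega)}$ pieces collect into the additive remainder $C_2\|f\|_{L^s(\Omega)}$. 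A minor point to monitor is the borderline configuration of exponents in the whole-space inequality, where $m-j-n/r$ is a nonnegative integer and one must keep $\alpha<1$; outside this exceptional case the argument is routine, and within it one invokes the standard refinement of the $\R^n$ inequality.
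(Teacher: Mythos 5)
The paper does not actually prove Lemma \ref{P1}: it is quoted as a classical result with a pointer to Nirenberg's paper, so there is no in-house argument to compare against. Your route --- Stein's total extension operator, the homogeneous Gagliardo--Nirenberg inequality on $\R^n$, and an Ehrling-type compactness lemma to trade $\|f\|_{W^{m,r}(\Omega)}$ for $\|D^m f\|_{L^r(\Omega)}+\|f\|_{L^s(\Omega)}$ --- is the standard one, and the first three ingredients are correctly deployed; in particular the contradiction argument for $\|D^i f\|_{L^r(\Omega)}\le\varepsilon\|D^m f\|_{L^r(\Omega)}+C_\varepsilon\|f\|_{L^s(\Omega)}$ does go through (even for $0<s<1$), and you are right to flag the exceptional case of the whole-space inequality.

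The gap is in the final absorption. After substituting the Ehrling bound and using $(a+b)^{\alpha}\le a^{\alpha}+b^{\alpha}$, you are left with
$$\|D^j f\|_{L^p(\Omega)}\le C\|D^m f\|_{L^r(\Omega)}^{\alpha}\|f\|_{L^q(\Omega)}^{1-\alpha}+C\|f\|_{L^s(\Omega)}^{\alpha}\|f\|_{L^q(\Omega)}^{1-\alpha},$$
and the second term is not $C_2\|f\|_{L^s(\Omega)}$. Young's inequality converts it into $C\|f\|_{L^s(\Omega)}+C\|f\|_{L^q(\Omega)}$, and on a bounded domain $\|f\|_{L^q(\Omega)}$ is dominated by $\|f\|_{L^s(\Omega)}$ only when $s\ge q$. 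For $s\ge q$ --- which covers every instance actually used in Corollary \ref{C1}, where $s=q=2$ --- your proof is complete. For $s<q$ it is not: take, say, $n=3$, $m=1$, $j=0$, $r=2$, $q=100$, $s=1$, $\alpha=\tfrac12$, which satisfies all the hypotheses of the lemma; here $q$ lies far above the Sobolev exponent of $W^{1,2}(\Omega)$, so $\|f\|_{L^q(\Omega)}$ is not controlled by $\|f\|_{W^{m,r}(\Omega)}$ either, and neither H\"older, nor Sobolev embedding, nor a case distinction on the relative size of $\|D^mf\|_{L^r(\Omega)}$ and $\|f\|_{L^s(\Omega)}$ closes the estimate. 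Handling arbitrary $s>0$ genuinely requires more than Young's inequality (e.g.\ Nirenberg's original slice-wise argument on cubes). You should either restrict your claim to $s\ge q$, which is all this paper needs, or supply that additional argument.
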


\vskip .1in
Especially, the following special cases will be used.

\begin{corollary}\label{C1}
Suppose $\Omega \subset\R^2$ be a bounded domain with smooth boundary, then
\vskip 0.2cm
(1)\,\,\,$\| f \|_{L^{4}(\Omega)} \leq C\, (\|  f \|_{L^{2}(\Omega)}^{\f12} \| \nabla f \|_{L^{2}(\Omega)}^{\f12} + \| f \|_{L^{2}(\Omega)}),\,\,\,\forall f\in H^1(\Omega);$\vskip 0.2cm

(2)\,\,\,$\| \nabla f \|_{L^{4}(\Omega)} \leq C\, (\|  f \|_{L^{2}(\Omega)}^{\f14} \| \nabla^2 f \|_{L^{2}(\Omega)}^{\f34} + \| f \|_{L^{2}(\Omega)}),\,\,\,\forall f\in H^2(\Omega);$\vskip 0.2cm

(3)\,\,\,$\| f \|_{L^{\infty}(\Omega)} \leq C\, (\|  f \|_{L^{2}(D)}^{\f12} \| \nabla^2 f \|_{L^{2}(D)}^{\f12} + \| f \|_{L^{2}(\Omega)}),\,\,\,\forall f\in H^2(\Omega);$\vskip 0.2cm

(4)\,\,\,$\| f \|_{L^{\infty}(\Omega)} \leq C\, (\|  f \|_{L^{2}(D)}^{\f23} \| \nabla^3 f \|_{L^{2}(D)}^{\f13} + \| f \|_{L^{2}(\Omega)}),\,\,\,\forall f\in H^3(\Omega).$
\end{corollary}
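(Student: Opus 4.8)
The plan is to obtain all four inequalities as direct specializations of the general Gagliardo--Nirenberg estimate in Lemma~\ref{P1}, taking $n=2$ throughout and making the uniform choice $r=q=s=2$. Under this choice the conclusion of Lemma~\ref{P1} collapses to
\beq\label{giform}
\|\mathrm{D}^{j}f\|_{L^{p}(\Omega)}\le C_1\|\mathrm{D}^{m}f\|_{L^{2}(\Omega)}^{\alpha}\|f\|_{L^{2}(\Omega)}^{1-\alpha}+C_2\|f\|_{L^{2}(\Omega)},
\eeq
so that in each part only three things must be supplied: the triple $(p,j,m)$ that matches the left-hand side of \eqref{giform} to the desired norm, the value of the interpolation exponent $\alpha$ forced by the dimensional balance, and a check that this $\alpha$ lies in the admissible window $[\,j/m,\,1\,]$.

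For part~(1) I would take $(p,j,m)=(4,0,1)$. The balance relation $\f14=\alpha\bigl(\f12-\f12\bigr)+(1-\alpha)\f12$ then forces $\alpha=\f12$, which satisfies $0=j/m\le\alpha\le1$, and \eqref{giform} is exactly the asserted bound. For part~(2) I would take $(p,j,m)=(4,1,2)$; here $\f14-\f12=\alpha\bigl(\f12-1\bigr)+(1-\alpha)\f12$ gives $\alpha=\f34\in[\f12,1]$. Parts~(3) and~(4) are the endpoint case $p=\infty$, which is permitted in Lemma~\ref{P1}: choosing $(p,j,m)=(\infty,0,2)$ the relation $0=\alpha\bigl(\f12-1\bigr)+(1-\alpha)\f12$ yields $\alpha=\f12$, while choosing $(p,j,m)=(\infty,0,3)$ the relation $0=\alpha\bigl(\f12-\f32\bigr)+(1-\alpha)\f12$ yields $\alpha=\f13\in[0,1]$. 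In every case substituting the computed $\alpha$ into \eqref{giform} reproduces the stated inequality, with $\mathrm{D}^{m}f$ identified with the top-order derivative $\nabla^{m}f$.

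Because each part is a one-line specialization, there is no genuine obstacle here; the only point requiring attention is the admissibility constraint $j/m\le\alpha\le1$ from Lemma~\ref{P1}, which must be verified after solving the dimensional identity. I have checked above that the forced value of $\alpha$ meets this constraint in all four cases, so the corollary follows immediately from Lemma~\ref{P1}.
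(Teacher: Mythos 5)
Your proposal is correct and matches the paper's (implicit) argument: the paper states the corollary as immediate special cases of Lemma~\ref{P1} with no written proof, and your choices $(p,j,m)=(4,0,1),(4,1,2),(\infty,0,2),(\infty,0,3)$ with $r=q=s=2$ yield exactly the exponents $\alpha=\f12,\f34,\f12,\f13$, each satisfying $j/m\le\alpha\le1$.
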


\vskip .1in
The next lemmas state the regularization
estimates for elliptic equations and Stokes system
defined on bounded domains (see, e.g., \cite{evans,GT,Galdi,Lady0,SunZhang}).

\begin{lemma}\label{elliptic}
Let $\Omega\subset\R^2$ be a bounded domain with smooth boundary. Consider the elliptic boundary value problem
\begin{equation}\label{eel}
\left\{\begin{array}{ll}
-\Delta f=g\quad&\hbox{in}\,\,\Omega,\\
f=0\quad&\hbox{on}\,\,\p{\Omega}.
\end{array}\right.
\end{equation}
If, for $p\in(1,\infty)$ and an integer $m\geq -1$,  $g\in W^{m,p}(\Omega)$,  then (\ref{eel}) has a unique solution $f$ satisfying
$$
\|f\|_{W^{m+2,p}(\Omega)}\leq C\|g\|_{W^{m,p}(\Omega)},
$$
where $C$ depending only on $\Omega, \,m$ and $p$.
\end{lemma}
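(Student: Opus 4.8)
The plan is to isolate the one genuinely analytic ingredient---the boundedness of the second-derivative singular integrals---and reduce everything else to it. The uniqueness is immediate: the difference $h$ of two solutions is harmonic in $\Omega$ with $h|_{\p\Omega}=0$, so testing $-\Delta h=0$ against $h$ gives $\int_\Omega|\nabla h|^2=0$ and hence $h\equiv 0$. The substance is therefore the existence together with the a priori bound $\|f\|_{W^{m+2,p}(\Omega)}\le C\|g\|_{W^{m,p}(\Omega)}$. I would establish all nonnegative orders $m\ge 0$ on the same footing, since in the Newtonian potential representation one can always shift $m$ derivatives onto the datum (or equivalently differentiate the equation, noting that tangential differentiation preserves the Dirichlet condition) and leave exactly two on the kernel. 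The single borderline order $m=-1$ I would then recover from the case $m=0$ by the transposition (duality) method: solving the adjoint Dirichlet problem $-\Delta\psi=\phi$ for $\phi\in L^{p'}$, one uses its $W^{2,p'}$ regularity to convert a $W^{-1,p}(\Omega)$ datum into the desired $W^{1,p}(\Omega)$ control of $f$, where $p'$ denotes the conjugate exponent.

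For the interior estimate I would use the Newtonian potential $f=\Phi*(\chi g)$ against a cutoff $\chi$. For a multi-index $\beta$ with $|\beta|=m+2$ one can write $\p^\beta f=\p_i\p_j\big(\Phi*\p^\gamma(\chi g)\big)$ with $|\gamma|=m$, and the operators $\p_i\p_j(\Phi*\,\cdot\,)$ have Calderón--Zygmund kernels, hence are bounded on $L^p(\R^2)$ for every $p\in(1,\infty)$. This yields $\|\nabla^{m+2}f\|_{L^p}\le C\|g\|_{W^{m,p}}$ on any subdomain compactly contained in $\Omega$, and together with the equation and Poincaré's inequality it gives the full interior $W^{m+2,p}$ bound.

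The boundary estimate is the step I expect to be the main obstacle. Using smoothness of $\p\Omega$ I would cover $\overline\Omega$ by finitely many coordinate charts and in each boundary chart flatten $\p\Omega$ by a $C^\infty$ diffeomorphism. Under this change of variables $-\Delta$ becomes a variable-coefficient second-order elliptic operator with lower-order terms, while the Dirichlet problem on the flattened half-ball can be turned into a whole-space problem by odd reflection across the boundary, so that the interior singular-integral estimate applies. The price is that one must absorb (i) the deviation of the leading coefficients from constants, made small by shrinking the charts, and (ii) the first-order and commutator terms $[\Delta,\chi]f$ produced by the cutoff and the diffeomorphism, handled by the interpolation inequality $\|\nabla f\|_{L^p}\le \varepsilon\|\nabla^2 f\|_{L^p}+C_\varepsilon\|f\|_{L^p}$. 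Summing the local bounds against a partition of unity subordinate to the cover produces the global estimate $\|f\|_{W^{m+2,p}(\Omega)}\le C\big(\|g\|_{W^{m,p}(\Omega)}+\|f\|_{L^p(\Omega)}\big)$, and the extra lower-order term is then removed by a standard compactness--contradiction argument that invokes the uniqueness established above.

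Finally, existence I would obtain by the method of continuity. One connects $-\Delta$ to the invertible operator $-\Delta+\mathrm{Id}$ through the family $L_t=-\Delta+(1-t)\mathrm{Id}$, $t\in[0,1]$, each member being uniformly elliptic with nonnegative zeroth-order coefficient and, by the same Calderón--Zygmund scheme, satisfying the a priori estimate uniformly in $t$. Since $L_0=-\Delta+\mathrm{Id}$ is solvable (its bilinear form is coercive, so Lax--Milgram applies at $p=2$ and the a priori bound upgrades the regularity) and the uniform estimate forces the solution operator to be injective with closed range, solvability propagates to $t=1$, i.e. to $-\Delta$, on all of $W^{m,p}(\Omega)$. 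This completes the scheme.
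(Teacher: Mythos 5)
The paper does not prove this lemma at all: it is quoted as a standard regularity fact with a pointer to the references (Evans, Gilbarg--Trudinger, Galdi, Ladyzhenskaya), so there is no in-paper argument to compare against. Your outline is precisely the classical proof found in those sources -- interior Calder\'on--Zygmund estimates for the Newtonian potential, boundary flattening with a partition of unity and absorption of the frozen-coefficient errors, removal of the lower-order term by compactness plus uniqueness, existence by the method of continuity from $-\Delta+\mathrm{Id}$, and the borderline case $m=-1$ by transposition -- and as a scheme it is correct. Two small points deserve care if you were to write it out. First, your uniqueness argument tests $-\Delta h=0$ against $h$, which requires $h\in H^1_0(\Omega)$; this is fine for $m\ge 0$ (in two dimensions $W^{2,p}\hookrightarrow H^1$ for every $p>1$), but for $m=-1$ with $p<2$ the solution only lies in $W^{1,p}_0$ and uniqueness must itself be obtained by the duality argument you invoke for the estimate, not by the energy identity. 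Second, the odd reflection across the flattened boundary is clean only for the constant-coefficient Laplacian; after the diffeomorphism one either reflects the variable-coefficient operator compatibly or, as in Gilbarg--Trudinger, proves the half-space potential estimate directly. Neither point changes the substance of the argument.
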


\begin{lemma}\label{stokes}
Let $\Omega\subset\R^2$ be a bounded domain with smooth boundary. Consider the stationary Stokes system
\begin{equation}\label{stokesequl}
\left\{\begin{array}{ll}
-\Delta {\mathbf u}+\nabla p={\mathbf f}\quad&\hbox{in}\,\,\Omega,\\
\nabla\cdot {\mathbf u}=0\quad&\hbox{in}\,\,\Omega,\\
{\mathbf u}=0\quad&\hbox{on}\,\,\p{\Omega}.
\end{array}\right.
\end{equation}
If, for $q\in(1,\infty)$, ${\mathbf f}\in L^{q}(\Omega)$,  then there exists a unique solution ${\mathbf u}\in  W_0^{1,q}(\Omega)\cap W^{2,q}(\Omega)$ of (\ref{stokesequl}) satisfying
\ben\label{Stokes1}
\|\mathbf u\|_{W^{2,q}(\Omega)}+\|\nabla p\|_{L^{q}(\Omega)}\leq C\|\mathbf f\|_{L^{q}(\Omega)}.
\een
If ${\mathbf f}=\nabla\cdot F$ with $F\in L^{q}(\Omega)$, then
\ben\label{Stokes2}
\|\mathbf u\|_{W^{1,q}(\Omega)}\leq C\|F\|_{L^{q}(\Omega)}.
\een
Besides, if ${\mathbf f}=\nabla\cdot F$ with $F_{ij}=\p_{k}H^{k}_{ij}$ and $H^{k}_{ij}\in W_0^{1,q}(\Omega)$ for $i,j,k=1,...,N$,  then
\ben\label{Stokes3}
\|\mathbf u\|_{L^{q}(\Omega)}\leq C\|H\|_{L^{q}(\Omega)}.
\een
Here, all the above constants $C$ depend only on $\Omega$ and $q$.
\end{lemma}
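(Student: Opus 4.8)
The plan is to establish the second-order estimate \eqref{Stokes1} first and then to deduce the lower-order estimates \eqref{Stokes2} and \eqref{Stokes3} from it by duality and interpolation, since all three assert the very same gain of two derivatives for the Stokes resolvent, read off three successive rungs of the Sobolev scale.

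\emph{Existence, uniqueness and the estimate \eqref{Stokes1}.} Uniqueness is immediate from the energy identity: if $\mathbf f=0$, testing the first equation of \eqref{stokesequl} against $\mathbf u$ and integrating by parts annihilates the pressure (because $\nabla\cdot\mathbf u=0$ and $\mathbf u|_{\p\Omega}=0$) and gives $\norm{\nabla\mathbf u}_{L^2(\Omega)}^2=0$, so $\mathbf u\equiv0$ by Poincar\'e and $p$ is constant. For $q=2$ and $\mathbf f\in L^2$, existence of a weak solution follows by restricting the bilinear form $(\mathbf u,\mathbf v)\mapsto\int_\Omega\nabla\mathbf u:\nabla\mathbf v\dd x$ to the Hilbert space of divergence-free fields in $H_0^1(\Omega)$ and invoking Lax--Milgram, the pressure being recovered through de Rham's theorem. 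The crux is the a priori bound \eqref{Stokes1}. Here I would use that, after eliminating $p$, the Stokes system is an elliptic system in the sense of Agmon--Douglis--Nirenberg whose Dirichlet condition satisfies the complementing (Lopatinskii--Shapiro) condition; the ADN $L^q$-theory --- equivalently, the Calder\'on--Zygmund analysis of the explicit Stokes (Oseen) kernel together with a partition of unity and boundary flattening --- then delivers $\norm{\mathbf u}_{W^{2,q}(\Omega)}+\norm{\nabla p}_{L^q(\Omega)}\le C\norm{\mathbf f}_{L^q(\Omega)}$. Existence for general $q\in(1,\infty)$ is obtained from this estimate by the method of continuity starting from the $q=2$ solution, or directly from the potential-theoretic construction of Cattabriga and Solonnikov.

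\emph{The estimate \eqref{Stokes3} by duality.} Let $q'=q/(q-1)$, fix $\bm{\psi}\in L^{q'}(\Omega)$, and let $(\mathbf w,\sigma)$ solve \eqref{stokesequl} with right-hand side $\bm{\psi}$, so that \eqref{Stokes1} gives $\norm{\nabla^2\mathbf w}_{L^{q'}(\Omega)}\le\norm{\mathbf w}_{W^{2,q'}(\Omega)}\le C\norm{\bm{\psi}}_{L^{q'}(\Omega)}$. Testing the first equation of \eqref{stokesequl} against $\mathbf w$ and the dual equation against $\mathbf u$, two integrations by parts make all pressure and boundary terms vanish (using $\mathbf u|_{\p\Omega}=\mathbf w|_{\p\Omega}=0$ and $\nabla\cdot\mathbf u=\nabla\cdot\mathbf w=0$) and give $\int_\Omega\mathbf u\cdot\bm{\psi}\dd x=\int_\Omega\mathbf f\cdot\mathbf w\dd x$. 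Inserting $\mathbf f=\nabla\cdot F$ with $F_{ij}=\p_kH^k_{ij}$ and integrating by parts twice more, the boundary terms dropping because $H\in W_0^{1,q}(\Omega)$, turns the right-hand side into $\int_\Omega H^k_{ij}\,\p_j\p_kw_i\dd x$, whence
\[
\abs{\int_\Omega\mathbf u\cdot\bm{\psi}\dd x}\le\norm{H}_{L^q(\Omega)}\norm{\nabla^2\mathbf w}_{L^{q'}(\Omega)}\le C\norm{H}_{L^q(\Omega)}\norm{\bm{\psi}}_{L^{q'}(\Omega)}.
\]
Taking the supremum over $\norm{\bm{\psi}}_{L^{q'}(\Omega)}\le1$ yields \eqref{Stokes3}.

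\emph{The estimate \eqref{Stokes2} and the main difficulty.} Estimate \eqref{Stokes2} is again the gain of two derivatives, now from data $\nabla\cdot F\in W^{-1,q}(\Omega)$ to a solution in $W^{1,q}(\Omega)$; it follows by complex interpolation of the operator bounds \eqref{Stokes1} and \eqref{Stokes3} at $\theta=\tfrac12$, using the identifications $[W^{-2,q},L^q]_{1/2}=W^{-1,q}$ and $[L^q,W^{2,q}]_{1/2}=W^{1,q}$ valid on smooth bounded domains, or equally well directly from the ADN estimate on the shifted scale. Existence and uniqueness in the classes of \eqref{Stokes2}--\eqref{Stokes3} are then settled by approximating the rough data by smooth data, applying the closed estimates, and passing to the limit, uniqueness coming once more from the $L^2$ energy identity. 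The main obstacle is squarely the second-order bound \eqref{Stokes1}: it is not an energy estimate but a genuine $L^q$ Calder\'on--Zygmund statement, so the real work is verifying the complementing condition for the Stokes system and controlling the singular integrals up to the boundary; once \eqref{Stokes1} is in hand, \eqref{Stokes2} and \eqref{Stokes3} are comparatively soft consequences.
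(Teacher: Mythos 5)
The paper does not actually prove Lemma \ref{stokes}: it is stated as a known regularity package for the Stokes operator and attributed to the literature (the references \cite{evans,GT,Galdi,Lady0,SunZhang} cited just before the lemma), so there is no internal argument to measure yours against. Your sketch is the standard textbook route and is essentially sound: Lax--Milgram plus de Rham for existence at $q=2$, the Agmon--Douglis--Nirenberg/Cattabriga--Solonnikov $L^q$ theory for \eqref{Stokes1}, and duality against the adjoint Stokes problem for the lower-order estimates. The duality computation for \eqref{Stokes3} is correct as written: the identity $\int_\Omega \mathbf u\cdot\bm{\psi}\,dx=\int_\Omega \mathbf f\cdot\mathbf w\,dx$ holds because the Stokes operator is symmetric and the pressure terms are killed by the divergence-free, no-slip test fields, and the two further integrations by parts lose no boundary terms (one because $\mathbf w|_{\p\Omega}=0$, one because $H\in W_0^{1,q}(\Omega)$). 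Two caveats. First, the genuinely hard content --- verifying the complementing condition and controlling the boundary singular integrals behind \eqref{Stokes1} --- is invoked rather than proved; since the paper itself simply cites this, that is acceptable, but it should be flagged as an appeal to \cite{Galdi,Lady0} rather than a proof. Second, your derivation of \eqref{Stokes2} by complex interpolation at $\theta=\tfrac12$ is the soft spot: \eqref{Stokes3} as stated is an a priori bound for a restricted class of right-hand sides (double divergences of $W_0^{1,q}$ tensors), not a bound for the solution operator on all of a fixed Banach space $W^{-2,q}(\Omega)$, and the identification $[W^{-2,q},L^q]_{1/2}=W^{-1,q}$ on a bounded domain depends on which dual/negative-order spaces are meant; none of this is spelled out. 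A cleaner and more standard route to \eqref{Stokes2} is the gradient-level duality (equivalently, the $L^q$ inf--sup condition, which itself follows from \eqref{Stokes1} applied with exponent $q'$), i.e.\ Galdi's theory of $q$-generalized solutions for data in $W^{-1,q}(\Omega)$ together with $\|\nabla\cdot F\|_{W^{-1,q}(\Omega)}\leq \|F\|_{L^q(\Omega)}$. With that replacement (or an honest citation, as the paper makes), your outline is a faithful reconstruction of the classical proof.
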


\begin{lemma}\label{newstokes}
Let $\Omega\subset\R^2$ be a bounded domain with smooth boundary and ${\mathbf f}=\nabla\cdot F$ be the same as in system \eqref{stokesequl}, then for $F\in W^{1,q}(\Omega)$ with $q\in(2,\infty)$, the solution $\mathbf u$ of system \eqref{stokesequl} satisfies
\ben\label{newstokes1}
\|\nabla{\mathbf u}\|_{L^{\infty}(\Omega)}\leq C(1+\|F\|_{L^\infty(\Omega)}){\rm ln}(e+\|\nabla{F}\|_{L^{q}(\Omega)}),
\een
where $C$ depending only on $\Omega$.
\end{lemma}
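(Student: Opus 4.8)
The plan is to estimate $\|\nabla\mathbf u\|_{L^\infty}$ by interpolating a family of $L^p$ bounds that degrade only \emph{linearly} in $p$ against the top-order $W^{2,q}$ bound, and then to optimize the free exponent $p$; set $M:=\|\nabla F\|_{L^q(\Omega)}$ and $N:=\|F\|_{L^\infty(\Omega)}$. Two inputs drive the argument. First, since $\mathbf f=\nabla\cdot F\in L^q(\Omega)$, estimate \eqref{Stokes1} of Lemma \ref{stokes} gives $\|\mathbf u\|_{W^{2,q}(\Omega)}\le C\|\nabla\cdot F\|_{L^q(\Omega)}\le CM$, so $\|\nabla\mathbf u\|_{W^{1,q}(\Omega)}\le CM$; as $q>2$, Morrey's embedding places $\nabla\mathbf u$ in $C^{0,\sigma}(\overline\Omega)$ with $\sigma=1-2/q$ and Hölder seminorm $\le CM$, which in particular makes the left-hand side finite. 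Second, and crucially, the solution operator $F\mapsto\nabla\mathbf u$ of \eqref{stokesequl} is a Calderón--Zygmund operator on the smooth bounded domain $\Omega$, so the constant in \eqref{Stokes2} can be taken to grow at most linearly in the exponent: $\|\nabla\mathbf u\|_{L^p(\Omega)}\le Cp\,\|F\|_{L^p(\Omega)}$ for all $p\ge2$, with $C=C(\Omega)$. Since $|\Omega|<\infty$, $\|F\|_{L^p(\Omega)}\le|\Omega|^{1/p}N\le CN$, and hence
\[
\|\nabla\mathbf u\|_{L^p(\Omega)}\le C\,p\,N,\qquad p\ge2 .
\]

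Next I would combine these through the Gagliardo--Nirenberg inequality of Lemma \ref{P1}, applied to $\phi=\nabla\mathbf u$ in dimension $n=2$ with target $L^\infty$, highest term $\nabla\phi\in L^q$ and lowest term $\phi\in L^p$:
\[
\|\nabla\mathbf u\|_{L^\infty(\Omega)}\le C\,\|\nabla\mathbf u\|_{W^{1,q}(\Omega)}^{\alpha}\,\|\nabla\mathbf u\|_{L^p(\Omega)}^{1-\alpha}+C\,\|\nabla\mathbf u\|_{L^2(\Omega)},\qquad \alpha=\bigl(1+p(\tfrac12-\tfrac1q)\bigr)^{-1}.
\]
Here the decisive features are that $\alpha\sim c_q/p\to0$ as $p\to\infty$ and that the constant $C$ stays bounded in this limit; the latter is most transparent from a ball argument, since at a near-maximum point of $|\nabla\mathbf u|$ the Hölder bound forces $|\nabla\mathbf u|\ge\tfrac12\|\nabla\mathbf u\|_{L^\infty}$ on a ball of radius $\sim(\|\nabla\mathbf u\|_{L^\infty}/M)^{1/\sigma}$, and integrating $|\nabla\mathbf u|^p$ over that ball reproduces the displayed inequality with a purely $\Omega$-dependent constant. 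Inserting $\|\nabla\mathbf u\|_{L^2}\le C\|F\|_{L^2}\le CN$ together with the two inputs yields
\[
\|\nabla\mathbf u\|_{L^\infty(\Omega)}\le C\,M^{\alpha}\,(pN)^{1-\alpha}+CN .
\]

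It remains to optimize in $p$. For $M$ bounded the claim is immediate, so assume $M$ large and take $p=\ln(e+M)\ge2$. Then $(pN)^{1-\alpha}\le pN$, while the small power tames the top norm,
\[
M^{\alpha}=\exp\!\Bigl(\tfrac{\ln M}{\,1+(\frac12-\frac1q)\ln(e+M)\,}\Bigr)\le\exp\!\Bigl(\tfrac{1}{\frac12-\frac1q}\Bigr)=C_q .
\]
Therefore $\|\nabla\mathbf u\|_{L^\infty}\le C_qN\ln(e+M)+CN$, and since $\ln(e+M)\ge1$ this is precisely
\[
\|\nabla\mathbf u\|_{L^\infty(\Omega)}\le C\,(1+\|F\|_{L^\infty(\Omega)})\,\ln\bigl(e+\|\nabla F\|_{L^q(\Omega)}\bigr),
\]
the fixed exponent $q>2$ being absorbed into $C=C(\Omega)$.

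The one genuinely delicate point is the linear-in-$p$ growth of the Stokes $L^p$-estimate; the remainder is endpoint interpolation and the choice $p\sim\ln(e+M)$. I would secure that growth via the Calderón--Zygmund character of the Stokes solution operator on $\Omega$ --- equivalently, by first proving $\|\nabla\mathbf u\|_{\mathrm{BMO}(\Omega)}\le C\|F\|_{L^\infty(\Omega)}$ and then invoking the John--Nirenberg inequality, which returns $\|\nabla\mathbf u\|_{L^p(\Omega)}\le Cp\,\|F\|_{L^\infty(\Omega)}$ for $p\ge2$ --- or by a direct Green's-matrix kernel estimate available on the smooth bounded domain $\Omega$.
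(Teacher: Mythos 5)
The paper never proves Lemma \ref{newstokes}: it is listed among the preliminary regularity facts with the proof deferred to the cited literature (it is essentially the logarithmic Stokes estimate of Sun and Zhang \cite{SunZhang}), so there is no in-paper argument to compare you against. Your proposal is the standard Brezis--Gallouet/Beale--Kato--Majda-type derivation of such estimates, and its architecture is sound: the bound $\|\nabla\mathbf u\|_{W^{1,q}(\Omega)}\le C\|\nabla F\|_{L^q(\Omega)}$ from \eqref{Stokes1}, a family of $L^p$ bounds growing linearly in $p$, an interpolation inequality whose constant stays bounded as $p\to\infty$ (your ball argument at a near-maximum point, using the $C^{0,1-2/q}$ seminorm supplied by Morrey, is exactly the right way to sidestep the unspecified $p$-dependence of the constants in Lemma \ref{P1}), and the optimization $p\sim\ln(e+\|\nabla F\|_{L^q(\Omega)})$. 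These steps all check out.

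The one place where the write-up falls short of a proof is the step you flag yourself: $\|\nabla\mathbf u\|_{L^p(\Omega)}\le Cp\,\|F\|_{L^p(\Omega)}$ with $C=C(\Omega)$ uniform in $p\ge 2$. This is the entire analytic content of the lemma; everything else is soft. Estimate \eqref{Stokes2} as stated only gives a constant $C(\Omega,q)$ with no control on its growth in the exponent, so it cannot be quoted for this. The route via $\|\nabla\mathbf u\|_{\mathrm{BMO}(\Omega)}\le C\|F\|_{L^\infty(\Omega)}$ plus John--Nirenberg is correct and is the cleanest way to extract the linear growth, but that BMO estimate for the Stokes solution operator with divergence-form data on a bounded domain (boundary corrections included) is itself a nontrivial theorem which you assert rather than prove or reference precisely; until it is pinned down, the argument has a genuine gap at its core. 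Two smaller points: your final constant is $C_q=\exp\bigl((1/2-1/q)^{-1}\bigr)$, which blows up as $q\to 2^{+}$, so the constant really depends on $q$ and not only on $\Omega$ as the lemma claims (harmless for the paper, which ultimately uses a fixed $q=4$, but worth saying explicitly); and you should take $p=2+\ln(e+M)$ or restrict to $M$ large so that $p\ge 2$ is actually guaranteed.
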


\begin{lemma}\label{timestokes}
Let $1<p,\,q<\infty$, and suppose that $f\in L^p(0,T;L^q(\Omega))$, $u_0\in W^{2,p}(\Omega)$. If $({\mathbf u},\,p)$ is the solution of the Stokes system
\begin{equation}\label{stokesequ2}
\left\{\begin{array}{ll}
\p_t{\mathbf u}-\Delta {\mathbf u}+\nabla p={\mathbf f}\quad&\hbox{in}\,\,\Omega,\\
\nabla\cdot {\mathbf u}=0\quad&\hbox{in}\,\,\Omega,\\
{\mathbf u}=0\quad&\hbox{on}\,\,\p{\Omega},\\
{\mathbf u}(x,0)={\mathbf u}_0(x)\quad&\hbox{in}\,\,\Omega,
\end{array}\right.
\end{equation}
then there holds that
\ben\label{timestokes1}
\|\p_t{\mathbf u},\,{\nabla}^2{\mathbf u},\,\nabla p\|_{L^p(0,T;L^{q}(\Omega))}\leq C(\|\mathbf f\|_{L^p(0,T;L^{q}(\Omega))}+\|{\mathbf u}_0\|_{W^{2,p}(\Omega)}).
\een
\end{lemma}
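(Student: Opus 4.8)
The plan is to recast \eqref{stokesequ2} as an abstract parabolic Cauchy problem and invoke the operator-theoretic theory of maximal $L^p$-regularity. Let $\mathbb{P}$ denote the Helmholtz--Leray projection of $L^q(\Omega)$ onto the solenoidal subspace $L^q_\sigma(\Omega)=\overline{\{\mathbf{u}\in C_c^\infty(\Omega):\nabla\cdot\mathbf{u}=0\}}^{\,\|\cdot\|_{L^q}}$, and let $A_q=-\mathbb{P}\Delta$, with domain $D(A_q)=W^{2,q}(\Omega)\cap W_0^{1,q}(\Omega)\cap L^q_\sigma(\Omega)$, be the Stokes operator. Applying $\mathbb{P}$ to \eqref{stokesequ2} removes the pressure and reduces the system to
\beq
\p_t{\mathbf u}+A_q{\mathbf u}=\mathbb{P}{\mathbf f}\quad\text{in }(0,T),\qquad {\mathbf u}(0)={\mathbf u}_0 .
\eeq
Once the bounds on $\p_t\mathbf{u}$ and $A_q\mathbf{u}$ are in hand, the gradient of the pressure is recovered from the identity $\nabla p=(I-\mathbb{P})(\Delta\mathbf{u}+\mathbf{f})$ and the $L^q$-boundedness of $\mathbb{P}$, while the stationary estimate \eqref{Stokes1} of Lemma \ref{stokes} furnishes the equivalence $\|A_q\mathbf{u}\|_{L^q}\simeq\|\mathbf{u}\|_{W^{2,q}}$ on $D(A_q)$, converting the bound on $A_q\mathbf{u}$ into the stated $\nabla^2\mathbf{u}$ bound.

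The analytic heart of the argument is the following two facts, both classical for bounded smooth domains. First, $-A_q$ generates a bounded analytic semigroup $\{e^{-tA_q}\}_{t\ge 0}$ on $L^q_\sigma(\Omega)$; this follows from the uniform resolvent bound $\|\lambda(\lambda+A_q)^{-1}\|_{\mathcal{L}(L^q_\sigma)}\le C$ valid in a sector $|\arg\lambda|<\pi-\delta$. Second, $A_q$ admits bounded imaginary powers with power angle strictly less than $\pi/2$ (equivalently, since $L^q_\sigma(\Omega)$ is a UMD space for $1<q<\infty$, $A_q$ is $\mathcal{R}$-sectorial of angle $<\pi/2$). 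Granting these, the Dore--Venni theorem --- or, equivalently, Weis' characterization of maximal regularity through $\mathcal{R}$-boundedness --- shows that $\p_t+A_q$ is an isomorphism from the maximal-regularity space onto $L^p(0,T;L^q_\sigma)$ in the case ${\mathbf u}_0=0$, delivering
\beq
\|\p_t{\mathbf u}\|_{L^p(0,T;L^q)}+\|A_q{\mathbf u}\|_{L^p(0,T;L^q)}\le C\,\|\mathbb{P}{\mathbf f}\|_{L^p(0,T;L^q)}\le C\,\|{\mathbf f}\|_{L^p(0,T;L^q)} .
\eeq

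To accommodate the nonzero initial datum I would split ${\mathbf u}={\mathbf u}^{(1)}+{\mathbf u}^{(2)}$, where ${\mathbf u}^{(1)}(t)=e^{-tA_q}\mathbb{P}{\mathbf u}_0$ solves the homogeneous problem and ${\mathbf u}^{(2)}$ solves the inhomogeneous problem with zero data, to which the displayed estimate applies. Because ${\mathbf u}_0$ satisfies $\nabla\cdot{\mathbf u}_0=0$ and ${\mathbf u}_0|_{\p\Omega}=0$ we have $\mathbb{P}{\mathbf u}_0={\mathbf u}_0$, and then $A_q{\mathbf u}^{(1)}(t)=e^{-tA_q}A_q{\mathbf u}_0$, so the uniform bound of the semigroup gives $\|A_q{\mathbf u}^{(1)}\|_{L^p(0,T;L^q)}\le C\,T^{1/p}\|A_q{\mathbf u}_0\|_{L^q}$, with the same bound for $\p_t{\mathbf u}^{(1)}=-A_q{\mathbf u}^{(1)}$; here the natural and sharp requirement is that ${\mathbf u}_0$ lie in the real-interpolation trace space $(L^q_\sigma,D(A_q))_{1-1/p,\,p}$, for which the hypothesis ${\mathbf u}_0\in W^{2,p}(\Omega)$ together with the compatibility conditions is amply sufficient. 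Collecting the two parts and adding the pressure estimate yields \eqref{timestokes1}.

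I expect the main obstacle to be precisely the operator-theoretic input of the second paragraph: verifying the boundedness of the imaginary powers (or the $\mathcal{R}$-sectoriality) of the Stokes operator on a bounded domain. This rests on resolvent estimates for the resolvent Stokes problem that are uniform with respect to the spectral parameter $\lambda$ in a sector, a genuinely delicate point requiring localization near the boundary and a careful treatment of the associated pressure; by contrast, the reduction via $\mathbb{P}$, the semigroup estimate for the initial datum, and the recovery of $\nabla p$ are comparatively routine. Since the statement is quoted as a standard tool, in the paper itself the assertion would simply be attributed to the cited references.
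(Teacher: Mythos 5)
The paper offers no proof of Lemma \ref{timestokes} at all: it is quoted as a standard maximal-regularity estimate, with the burden carried by the references (the canonical one being Giga--Sohr \cite{GiSo}, which appears in the bibliography). Your sketch reconstructs exactly the argument found in those references --- project with the Helmholtz--Leray operator $\mathbb{P}$ to get the abstract Cauchy problem $\p_t{\mathbf u}+A_q{\mathbf u}=\mathbb{P}{\mathbf f}$, invoke bounded imaginary powers (Dore--Venni) or $\mathcal{R}$-sectoriality (Weis) of the Stokes operator for the zero-data part, treat the initial datum by the semigroup, recover $\nabla p=(I-\mathbb{P})({\mathbf f}+\Delta{\mathbf u})$, and convert $\|A_q{\mathbf u}\|_{L^q}$ into $\|{\mathbf u}\|_{W^{2,q}}$ via \eqref{Stokes1} --- and it is correct as a proof outline; you also rightly flag the bounded imaginary powers as the one genuinely nontrivial input. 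Three small remarks. First, the exponent in the lemma's hypothesis and right-hand side is almost certainly a typo for $W^{2,q}(\Omega)$ (spatial integrability must match the $L^q$ norms), which is what your estimate $\|A_q{\mathbf u}_0\|_{L^q}\le C\|{\mathbf u}_0\|_{W^{2,q}}$ implicitly uses. Second, your treatment of the initial datum needs ${\mathbf u}_0\in D(A_q)$, i.e.\ the compatibility conditions $\nabla\cdot{\mathbf u}_0=0$ and ${\mathbf u}_0|_{\p\Omega}=0$; these are not stated in the lemma but do hold in the paper's application (conditions \eqref{eq3}), and as you note membership in the real-interpolation trace space would suffice more generally. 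Third, on a bounded domain $A_q$ is invertible, so maximal regularity on $(0,\infty)$ restricts to $(0,T)$ with a $T$-dependent constant, which is all \eqref{timestokes1} claims. None of these affects the correctness of the approach.
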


%
\vskip .3in

\section{A priori estimates}
\label{apriori}
\setcounter{equation}{0}

This section is devoted to establishing the {\it a priori} estimates of
(\ref{eq1})-(\ref{eq20}), which is an important step in
the proof of Theorem \ref{T1}. To be more precise, we first
provide the definition of weak solutions of (\ref{eq1})-(\ref{eq20})
and then state the main result of this section as a proposition.

\begin{definition}\label{weak}
Let $\Omega\subset\R^2$ be a bounded domain with smooth boundary.
Assume $({\mathbf u}_0,w_0)\in H^1(\Omega)$. A pair of measurable functions $({\mathbf u}, w)$ is called a weak solution of (\ref{eq1})-(\ref{eq20}) if
\beno
(1)&&{\mathbf u}\in C(0,T;L^2(\Omega))\cap L^2(0,T;H_0^1(\Omega)),\,\,w\in C(0,T;L^2(\Omega));\\
(2)&&\int_{\Omega}{\mathbf u}_0\cdot{\mathbf \varphi}_0dx+\int_0^T\int_{\Omega}\big[{\mathbf u}\cdot{\mathbf \varphi}_t-(\nu+\kappa)\nabla {\mathbf u}\cdot\nabla{\mathbf \varphi}+{\mathbf u}\cdot\nabla{\mathbf \varphi}\cdot {\mathbf u}+2\kappa \nabla^{\perp}w\cdot{\mathbf \varphi}\big]dxdt=0,\\
&&\int_{\Omega}w_0\cdot\psi_0dx+\int_0^T\int_{\Omega}\big[w\psi_t+4\kappa w\psi+{\mathbf u}\cdot\nabla\psi\cdot w-2\kappa \nabla^{\perp}\cdot {\mathbf u}\psi\big]dxdt;
\eeno
holds for any $({\mathbf \varphi},\,\psi)\in C^\infty([0,T]\times \Omega)$ with $\nabla\cdot{\mathbf \varphi}={\mathbf \varphi}|_{\p \Omega}={\mathbf \varphi}(x,T)=0$ and ${\psi}(x,T)=0.$
\end{definition}

\vskip .1in
The main result of this section is stated in the following proposition.

\begin{proposition}\label{weak2}
Let $\Omega\subset\R^2$ be a bounded domain with smooth boundary and $({\mathbf u},w)$ be the smooth solution of \eqref{eq1}-\eqref{eq20}. Assume
${\mathbf u}_0\in H^2(\Omega)$ and $w_0\in W^{1,4}(\Omega)$, then there holds that
\ben\label{uH1-est}
\|\mathbf u\|_{L^\infty(0,T;H_0^1(\Omega))}+\|\mathbf u\|_{L^2(0,T;W^{2,4}(\Omega))}+\|w\|_{L^\infty(0,T; W^{1,4}(\Omega))}\leq\, C,
\een
where $C$ depends only on $T$, $\|{\mathbf u}_0\|_{H^2(\Omega)}$ and $\|{w}_0\|_{W^{1,4}(\Omega)}$.
\end{proposition}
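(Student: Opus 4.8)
The plan is to prove \eqref{uH1-est} for smooth solutions by a three-tier bootstrap, at each stage exploiting that the auxiliary field ${\mathbf v}$ is one derivative smoother than $w$ and that the shifted velocity ${\mathbf g}={\mathbf u}-(\nu+\kappa){\mathbf v}$ obeys the clean Stokes evolution \eqref{mathg}, from which the troublesome forcing $\nabla^{\perp}w$ has been removed. First I would record the basic energy identity: testing $\eqref{eq1}^1$ with ${\mathbf u}$ and $\eqref{eq1}^2$ with $w$, the convective terms drop by $\nabla\cdot{\mathbf u}=0$ and ${\mathbf u}|_{\partial\Omega}=0$ while the two coupling terms combine antisymmetrically, so a Gronwall step yields $\sup_{[0,T]}(\|{\mathbf u}\|_{L^2}^2+\|w\|_{L^2}^2)+\int_0^T\|\nabla{\mathbf u}\|_{L^2}^2\,dt\le C$. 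Since the $w$-equation carries no diffusion, higher integrability of $w$ can only be propagated by transport: multiplying $\eqref{eq1}^2$ by $p|w|^{p-2}w$ and using incompressibility to annihilate the convection gives $\frac{d}{dt}\|w\|_{L^p}\le C\|\nabla{\mathbf u}\|_{L^p}$, and the same computation applied to $\nabla w$ produces $\frac{d}{dt}\|\nabla w\|_{L^p}\le C\|\nabla{\mathbf u}\|_{L^\infty}\|\nabla w\|_{L^p}+C\|\nabla^2{\mathbf u}\|_{L^p}$. These transport inequalities reduce all control of $w$ and $\nabla w$ to control of $\nabla{\mathbf u}$ in $L^p$ and $L^\infty$.

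The cornerstone is the $H^1$ bound for ${\mathbf u}$, which I would obtain through ${\mathbf g}$. The stationary Stokes estimate \eqref{Stokes2} applied to \eqref{stokesv} (writing $\nabla^{\perp}w=\nabla\cdot F$ with $|F|\sim|w|$) gives $\|{\mathbf v}\|_{W^{1,q}}\le C\|w\|_{L^q}$, so ${\mathbf v}$ is slaved to $w$ at one order lower. Testing \eqref{mathg} with the Stokes operator $A{\mathbf g}$ yields $\frac12\frac{d}{dt}\|\nabla{\mathbf g}\|_{L^2}^2+(\nu+\kappa)\|A{\mathbf g}\|_{L^2}^2=\int_\Omega{\mathbf Q}\cdot A{\mathbf g}\,dx$, and I would bound each term of ${\mathbf Q}$ in $L^2$: the pieces $2\kappa A^{-1}\nabla^{\perp}(\nabla^{\perp}\cdot{\mathbf u})$ and $4\kappa{\mathbf v}$ by $C\|\nabla{\mathbf u}\|_{L^2}$ and $C\|w\|_{L^2}$ through \eqref{Stokes2}; the piece $A^{-1}\nabla^{\perp}({\mathbf u}\cdot\nabla w)$ by writing ${\mathbf u}\cdot\nabla w=\nabla\cdot({\mathbf u}w)$ and invoking the double-divergence bound \eqref{Stokes3} (legitimate because ${\mathbf u}w$ vanishes on $\partial\Omega$), which avoids $\nabla w$ entirely and costs only $C\|{\mathbf u}\|_{L^4}\|w\|_{L^4}$; and the genuinely nonlinear ${\mathbf u}\cdot\nabla{\mathbf u}$ exactly as in the two-dimensional Navier--Stokes $H^1$ estimate, using ${\mathbf u}={\mathbf g}+(\nu+\kappa){\mathbf v}$, Corollary \ref{C1}(2) and Young's inequality to absorb a fraction of $\|A{\mathbf g}\|_{L^2}^2$. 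Running a Gronwall argument for the coupled quantities $\|\nabla{\mathbf g}\|_{L^2}$ and $\|w\|_{L^4}$ against the $L^2_tH^1$ bound from the energy estimate then delivers $\|\nabla{\mathbf u}\|_{L^\infty_tL^2}+\|{\mathbf g}\|_{L^2_tH^2}+\|w\|_{L^\infty_tL^4}\le C$.

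Finally I would upgrade to $W^{2,4}$ and $W^{1,4}$. Maximal regularity for \eqref{mathg}, namely Lemma \ref{timestokes} with $p=2$ and $q=4$, bounds $\|{\mathbf g}\|_{L^2_tW^{2,4}}$ by $\|{\mathbf Q}\|_{L^2_tL^4}+\|{\mathbf g}(\cdot,0)\|_{W^{2,2}}$, where ${\mathbf Q}$ lies in $L^2_tL^4$ by the second tier together with the Gagliardo--Nirenberg inequalities of Corollary \ref{C1} (again keeping $A^{-1}\nabla^{\perp}({\mathbf u}\cdot\nabla w)$ in the ${\mathbf u}w$ form). Splitting $\|\nabla{\mathbf u}\|_{L^\infty}\le\|\nabla{\mathbf g}\|_{L^\infty}+(\nu+\kappa)\|\nabla{\mathbf v}\|_{L^\infty}$, I bound the first term by $\|{\mathbf g}\|_{W^{2,4}}$ via the embedding $W^{2,4}\hookrightarrow W^{1,\infty}$ and the second by the logarithmic Stokes estimate \eqref{newstokes1}, giving $\|\nabla{\mathbf v}\|_{L^\infty}\le C(1+\|w\|_{L^\infty})\ln(e+\|\nabla w\|_{L^4})$; likewise $\|\nabla^2{\mathbf u}\|_{L^4}\le\|\nabla^2{\mathbf g}\|_{L^4}+C\|\nabla w\|_{L^4}$ by \eqref{Stokes1}. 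Inserting these into the transport inequality for $\nabla w$ produces a differential inequality of the shape $\frac{d}{dt}\|\nabla w\|_{L^4}\le C(1+\|\nabla{\mathbf g}\|_{L^\infty}+\|w\|_{L^\infty})\|\nabla w\|_{L^4}\ln(e+\|\nabla w\|_{L^4})+C\|\nabla^2{\mathbf g}\|_{L^4}$, in which $\|w\|_{L^\infty}$ is itself controlled along characteristics by $\|w_0\|_{L^\infty}+C\int_0^t\|\nabla{\mathbf u}\|_{L^\infty}\,ds$ and $\|w_0\|_{L^\infty}<\infty$ since $W^{1,4}\hookrightarrow L^\infty$. This is an Osgood/log-Gronwall inequality with coefficients in $L^1_t$, so $\|\nabla w\|_{L^\infty_tL^4}$ remains finite on every $[0,T]$; then $\nabla w\in L^2_tL^4$ feeds back through \eqref{Stokes1} to close $\|{\mathbf u}\|_{L^2_tW^{2,4}}$ and complete \eqref{uH1-est}.

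I expect the principal obstacle to be closing this last, logarithmically coupled loop, in which $\|\nabla w\|_{L^4}$, $\|\nabla{\mathbf u}\|_{L^\infty}$ and $\|w\|_{L^\infty}$ all feed into one another; the only reason the loop avoids finite-time blow-up is the single logarithm supplied by Lemma \ref{newstokes}, which must be tracked precisely so that the governing inequality stays of Osgood type. A secondary difficulty is the bookkeeping of the nonlocal forcing terms $A^{-1}\nabla^{\perp}(\cdot)$ in ${\mathbf Q}$ and the verification that the double-divergence Stokes bound \eqref{Stokes3} genuinely applies, so that no uncontrolled $\nabla w$ ever enters the estimates for ${\mathbf g}$ --- precisely the gain bought by working with the one-order-smoother field ${\mathbf v}$.
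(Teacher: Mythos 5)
Your architecture is the same as the paper's: basic energy bounds, the slaving estimate $\|\mathbf v\|_{W^{1,q}(\Omega)}\leq C\|w\|_{L^q(\Omega)}$, a coupled Gronwall for $\|\nabla\mathbf g\|_{L^2(\Omega)}$ and $\|w\|_{L^4(\Omega)}$ using the $\nabla\cdot(\mathbf u w)$ double-divergence trick, maximal regularity for $\mathbf g$ in $L^2(0,T;W^{2,q}(\Omega))$, and a logarithmic Gronwall for $\|\nabla w\|_{L^4(\Omega)}$ driven by Lemma \ref{newstokes}. There is, however, one genuine gap, located exactly where you flag the principal obstacle: the coefficient $(1+\|w\|_{L^\infty(\Omega)})$ in your final differential inequality is not in $L^1(0,T)$ under your argument. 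You propose to control $\|w\|_{L^\infty(\Omega)}$ along characteristics by $\|w_0\|_{L^\infty(\Omega)}+C\int_0^t\|\nabla\mathbf u\|_{L^\infty(\Omega)}\,ds$, but at that stage $\|\nabla\mathbf u\|_{L^\infty(\Omega)}$ is itself only bounded through $\|\nabla\mathbf v\|_{L^\infty(\Omega)}\leq C(1+\|w\|_{L^\infty(\Omega)})\ln(e+\|\nabla w\|_{L^4(\Omega)})$, i.e.\ through the very quantity being estimated. Writing $y=\|\nabla w\|_{L^4(\Omega)}$, $z=\|w\|_{L^\infty(\Omega)}$ and $\Lambda(t)=\int_0^t\ln(e+y)\,ds$, Gronwall applied to your bound for $z$ gives $z\lesssim(1+\Lambda)e^{C\Lambda}$, and feeding this back into $y'\leq C(1+z)y\ln(e+y)+\dots$ yields only $\Lambda'\leq C+C\Lambda e^{C\Lambda}$, which permits finite-time blow-up: the exponentiation destroys the single logarithm, the Osgood structure is lost, and the estimate does not close for arbitrary $T$.

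The missing ingredient is the paper's Proposition \ref{w-Lp}: one must propagate $\|w\|_{L^\infty(0,T;L^q(\Omega))}$ for every $q\leq\infty$ \emph{before} touching $\nabla w$, using only the transport inequality $\frac{d}{dt}\|w\|_{L^q(\Omega)}+4\kappa\|w\|_{L^q(\Omega)}\leq C\|\nabla\mathbf g\|_{L^q(\Omega)}+C\|\nabla\mathbf v\|_{L^q(\Omega)}\leq C\|\mathbf g\|_{H^2(\Omega)}+C\|w\|_{L^q(\Omega)}$, where the Stokes bound $\|\nabla\mathbf v\|_{L^q(\Omega)}\leq C\|w\|_{L^q(\Omega)}$ closes the loop at the same integrability level and $\|\mathbf g\|_{H^2(\Omega)}\in L^2(0,T)$ comes from Lemma \ref{gH1}; Gronwall and $q\to\infty$ then give $\|w\|_{L^\infty(0,T;L^\infty(\Omega))}\leq C$ with no reference to $\nabla w$ or $\|\nabla\mathbf u\|_{L^\infty(\Omega)}$, so that the final log-Gronwall coefficient is honestly in $L^1(0,T)$. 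The same step also supplies the $\|w\|_{L^\infty(0,T;L^{2q}(\Omega))}$ bound you implicitly need to place the term $A^{-1}\nabla^{\perp}\nabla\cdot(\mathbf u w)$ of $\mathbf Q$ in $L^2(0,T;L^4(\Omega))$ at the maximal-regularity stage, which your second tier ($w\in L^\infty_tL^4$ only) does not furnish. With this intermediate proposition inserted, the rest of your argument goes through as written and coincides with the paper's proof.
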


\vskip .1in
The proof of this proposition relies on the following basic energy estimates.

\begin{proposition}\label{uL2}
Suppose $\Omega\subset\R^2$ be a bounded domain with smooth boundary and $({\mathbf u},w)$ be the smooth solution of \eqref{eq1}-\eqref{eq20}. If, in addition,
${\mathbf u}_0\in L^2(\Omega)$ and $w_0\in L^{2}(\Omega)$, then it holds that
\beno
&&\|\mathbf u\|_{L^\infty(0,T;L^2(\Omega))}+\|\mathbf u\|_{L^2(0,T;H_0^1(\Omega))}+\|w\|_{L^\infty(0,T;L^2(\Omega))}\leq\, C,
\eeno
where $C$ depends only on $T$, $\|{\mathbf u}_0\|_{L^2(\Omega)}$ and $\|{w}_0\|_{L^{2}(\Omega)}$.
\end{proposition}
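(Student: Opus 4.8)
The plan is to run the standard energy method directly on \eqref{eq1}, exploiting the divergence-free condition and the no-slip boundary condition \eqref{eq2} to eliminate the nonlinear and pressure contributions, and then to control the velocity--micro-rotation coupling by Young's inequality together with Gronwall. First I would take the $L^2(\Omega)$ inner product of the velocity equation $\eqref{eq1}^1$ with $\mathbf u$. Integration by parts turns the viscous term into $(\nu+\kappa)\|\nabla\mathbf u\|_{L^2}^2$, while the convection term $\int_\Omega(\mathbf u\cdot\nabla\mathbf u)\cdot\mathbf u\,dx$ and the pressure term $\int_\Omega\nabla\pi\cdot\mathbf u\,dx$ both vanish because $\nabla\cdot\mathbf u=0$ and $\mathbf u|_{\partial\Omega}=0$. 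This yields $\frac12\frac{d}{dt}\|\mathbf u\|_{L^2}^2+(\nu+\kappa)\|\nabla\mathbf u\|_{L^2}^2=-2\kappa\int_\Omega\nabla^\perp w\cdot\mathbf u\,dx$.

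Next I would take the inner product of the micro-rotation equation $\eqref{eq1}^2$ with $w$; again the transport term drops out using $\nabla\cdot\mathbf u=0$ and $\mathbf u|_{\partial\Omega}=0$, giving $\frac12\frac{d}{dt}\|w\|_{L^2}^2+4\kappa\|w\|_{L^2}^2=2\kappa\int_\Omega(\nabla^\perp\cdot\mathbf u)\,w\,dx$. The key structural point is the treatment of the two coupling terms. Integrating by parts in the velocity-side term and using $\mathbf u|_{\partial\Omega}=0$ gives $-2\kappa\int_\Omega\nabla^\perp w\cdot\mathbf u\,dx=2\kappa\int_\Omega(\nabla^\perp\cdot\mathbf u)\,w\,dx$, so after summing the two identities the coupling contributions add up to $4\kappa\int_\Omega(\nabla^\perp\cdot\mathbf u)\,w\,dx$, and we reach the energy balance $\frac12\frac{d}{dt}(\|\mathbf u\|_{L^2}^2+\|w\|_{L^2}^2)+(\nu+\kappa)\|\nabla\mathbf u\|_{L^2}^2+4\kappa\|w\|_{L^2}^2=4\kappa\int_\Omega(\nabla^\perp\cdot\mathbf u)\,w\,dx$.

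To close the estimate I would bound the right-hand side by $\|\nabla^\perp\cdot\mathbf u\|_{L^2}\le\|\nabla\mathbf u\|_{L^2}$, Cauchy--Schwarz and Young's inequality, splitting so that half of the dissipation $\frac{\nu+\kappa}2\|\nabla\mathbf u\|_{L^2}^2$ is absorbed on the left and only a multiple of $\|w\|_{L^2}^2$ remains on the right, namely $4\kappa\int_\Omega(\nabla^\perp\cdot\mathbf u)\,w\,dx\le\frac{\nu+\kappa}2\|\nabla\mathbf u\|_{L^2}^2+\frac{8\kappa^2}{\nu+\kappa}\|w\|_{L^2}^2$. This leaves $\frac{d}{dt}(\|\mathbf u\|_{L^2}^2+\|w\|_{L^2}^2)+(\nu+\kappa)\|\nabla\mathbf u\|_{L^2}^2\le C_0(\|\mathbf u\|_{L^2}^2+\|w\|_{L^2}^2)$ with $C_0=\frac{16\kappa^2}{\nu+\kappa}$. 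Gronwall's inequality then gives the uniform-in-time bound $\|\mathbf u(t)\|_{L^2}^2+\|w(t)\|_{L^2}^2\le(\|\mathbf u_0\|_{L^2}^2+\|w_0\|_{L^2}^2)e^{C_0 T}$ on $[0,T]$, which is precisely the $L^\infty(0,T;L^2)$ estimates for $\mathbf u$ and $w$; integrating the differential inequality over $[0,T]$ and invoking the Poincar\'e inequality to upgrade $\|\nabla\mathbf u\|_{L^2}$ to the full $H_0^1$ norm then produces the $L^2(0,T;H_0^1)$ bound for $\mathbf u$.

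The main subtlety — rather than a genuine obstacle at this level — is that, since the angular viscosity has been dropped in \eqref{eq1}, there is no dissipative term $\gamma\|\nabla w\|_{L^2}^2$ available to absorb the coupling; one must instead rely solely on the zeroth-order damping $4\kappa\|w\|_{L^2}^2$ and on the velocity dissipation. When $\nu\ge\kappa$ the damping already dominates the coupling and the total energy is monotone, but in general the coefficient $4\kappa-\frac{8\kappa^2}{\nu+\kappa}$ can be negative, so the Gronwall step (and hence the finite time horizon $T$ in the constant) is essential. All boundary integrals generated by the integrations by parts vanish thanks to $\mathbf u|_{\partial\Omega}=0$, so no hypotheses beyond the stated $L^2$ regularity of the data are needed at this stage.
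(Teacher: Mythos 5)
Your proposal is correct and follows essentially the same route as the paper: take the $L^2$ inner product of \eqref{eq1} with $({\mathbf u},w)$, use $\nabla\cdot{\mathbf u}=0$ and ${\mathbf u}|_{\partial\Omega}=0$ to kill the convection and pressure terms, integrate the coupling term $-2\kappa\int_\Omega\nabla^\perp w\cdot{\mathbf u}\,dx$ by parts to convert it into $2\kappa\int_\Omega(\nabla^\perp\cdot{\mathbf u})\,w\,dx$ (the paper does this via the pointwise identity $-\nabla^{\perp}w\cdot {\mathbf u}=\partial_{2}(u_1 w)-\partial_{1}(u_2 w)+(\nabla^{\perp}\cdot {\mathbf u})\,w$ with vanishing boundary integral), then absorb half the dissipation by Young's inequality and conclude with Gronwall. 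Your explicit tracking of the constant $\frac{8\kappa^2}{\nu+\kappa}$ and the remark on the sign of $4\kappa-\frac{8\kappa^2}{\nu+\kappa}$ are correct refinements but not needed beyond what the paper's estimate \eqref{bbb} already provides.
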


\begin{proof}
We start with the  global $L^2$-bound. Taking the inner product of $\eqref{eq1}$ with $({\mathbf u}, w)$  yields
\beno
&&\f12\f{d}{dt}(\|\mathbf u\|_{L^2(\Omega)}^2+\|w\|_{L^2(\Omega)}^2)+(\nu+\kappa)\|\nabla {\mathbf u}\|_{L^2(\Omega)}^2+4\kappa\|w\|_{L^2(D)}^2\\
&=&-2\kappa\int_{D}\nabla^{\perp}w\cdot {\mathbf u}\,dx+2\kappa\int_{\Omega}\nabla^{\perp}\cdot {\mathbf u}wdx.
\eeno
Noticing that $\nabla^{\perp}\cdot {\mathbf u}= \partial_{1} u_2-\partial_{2} u_1$
and $\nabla^{\perp}w =(-\partial_{2} w, \partial_{1} w)$, we have
$$
-\nabla^{\perp}w\cdot {\mathbf u}=u_1\partial_{2}w- u_2\partial_{1}w
= \partial_{2}(u_1\, w) - \partial_{1} (u_2\,w)+\,\nabla^{\perp}\cdot {\mathbf u} w.
$$
Integrating by parts and applying the boundary condition for $\mathbf u$, we have
\ben
&&-2\kappa\int_{\Omega}\nabla^{\perp}w\cdot {\mathbf u}\,dx+ 2\kappa\int_{\Omega}\nabla^{\perp}\cdot {\mathbf u}w\,dx  \notag\\
&=&4\kappa\int_{\Omega}\nabla^{\perp}\cdot {\mathbf u} w\,dx- 2\kappa\int_{\p \Omega}{\mathbf u}\cdot{{\bm n}^\perp}wds\notag\\
&=&4\kappa\int_{\Omega}\nabla^{\perp}\cdot {\mathbf u} w\,dx\notag\\
&\leq&\f{(\nu+\kappa)}2\|\nabla {\mathbf u}\|_{L^2(\Omega)}^2+C\|w\|_{L^2(\Omega)}^2,\label{bbb}
\een
where $\mathbf{n}^\perp=(-n_2,n_1)$.
It then follows, after integration in time, that
\ben
&&\|\mathbf u\|_{L^2(\Omega)}^2+\|w\|_{L^2(\Omega)}^2+(\nu+\kappa)\,\int_{0}^{T}\|\nabla {\mathbf u}\|_{L^2(\Omega)}^2dt+8\kappa\,\int_{0}^{T}\|w\|_{L^2(\Omega)}^2dt
\notag\\
&\leq&e^{CT}(\|{\mathbf u}_0\|_{L^2(\Omega)}^2+\|w_0\|_{L^2(\Omega)}^2)\equiv A_1(T,
\|{\mathbf u}_0, w_0\|_{L^2}),
\label{A1}
\een
where $C=C(\gamma,\kappa)$. This completes the proof of Proposition \ref{uL2}.
\end{proof}

\vskip .1in
Our next goal is to show the global bound for $\|\mathbf u\|_{H^{1}(\Omega)}$. As stated in the introduction, $\mathbf v$ is at the energy level of one order lower than $w$. Then for system \eqref{stokesv}, by setting
\begin{align}\notag
F=2\kappa\left( {\begin{array}{*{20}c}
	0&w   \\
	-w&0 \\
	\end{array} } \right),	
\end{align} we can then invoke Lemma \ref{stokes} to build up the estimates that
\ben\label{keyestimates}
\|\mathbf v\|_{W^{1,q}(\Omega)}\leq C\|w\|_{L^{q}(\Omega)}
\een
for any $q\in (1,\infty)$, which also yields, after applying Lemma \ref{uL2},
\ben\label{v-H1}
\|\mathbf v\|_{L^\infty(0,T;H^1(\Omega))}\leq C\|w\|_{L^\infty(0,T;L^2(\Omega))}\leq C(\|{\mathbf u}_0\|_{L^2(\Omega)}^2+\|w_0\|_{L^2(\Omega)}^2).
\een
Therefore, to establish the $H^1(\Omega)$ estimates of velocity, it suffices to do the $H^1$-norm estimates of ${\mathbf g}={\mathbf u}-(\nu+\kappa){\mathbf v}$ as bellow.

\begin{lemma}\label{gH1}
Under the assumptions of  Proposition \ref{uL2}, we further assume
$\mathbf {u}_0 \in H^1(\Omega)$ and $w_0 \in L^4(\Omega)$, then we obtain
\beno
&&\|\nabla{\mathbf g}\|_{L^\infty(0,T;L^2(\Omega))}+\|\Delta{\mathbf g}\|_{L^2(0,T;L^2(\Omega))}+\|w\|_{L^\infty(0,T;L^4(\Omega))}+\|w\|_{L^2(0,T;L^4(\Omega))}\leq\, C,
\eeno
where $C$ depends only on $T$, $\|{\mathbf u}_0\|_{H^1(\Omega)}$ and $\|{w}_0\|_{L^{4}(\Omega)}$.
\end{lemma}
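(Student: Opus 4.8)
The plan is to control $\mathbf u$ in $H^1$ through $\mathbf g=\mathbf u-(\nu+\kappa)\mathbf v$, exploiting that by \eqref{keyestimates} the auxiliary field obeys $\|\mathbf v\|_{W^{1,q}(\Omega)}\le C\|w\|_{L^q(\Omega)}$ for every $q\in(1,\infty)$, so that $\mathbf v$ costs \emph{no} derivative of $w$. Since $\mathbf g$ solves the Stokes system \eqref{mathg} with the solenoidal constraint and the no-slip condition, I would take the $L^2$ inner product of \eqref{mathg} with the Stokes operator $A\mathbf g=-\mathbb{P}\Delta\mathbf g$ (rather than with $-\Delta\mathbf g$, which would leave an uncontrolled boundary pressure term). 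Because $A\mathbf g$ is solenoidal the pressure drops out, and using the $W^{2,2}$ Stokes regularity of Lemma \ref{stokes} to replace $\|A\mathbf g\|_{L^2}$ by $\|\nabla^2\mathbf g\|_{L^2}\sim\|\Delta\mathbf g\|_{L^2}$, this yields
\beno
\f12\f{d}{dt}\|\nabla\mathbf g\|_{L^2}^2+(\nu+\kappa)\|A\mathbf g\|_{L^2}^2=\int_\Omega\mathbf Q\cdot A\mathbf g\dd x,
\eeno
with $\mathbf Q$ the forcing from \eqref{mathg}. In parallel I would estimate $w$ directly from its transport--damping equation.

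The heart of the matter is bounding the four pieces of $\mathbf Q$ in $L^2$. The damping term $-4\kappa\mathbf v$ and the term $2\kappa A^{-1}\nabla^{\perp}(\nabla^{\perp}\!\cdot\mathbf u)$ are harmless: by \eqref{keyestimates} and \eqref{Stokes2} they are bounded in $L^2$ by $C\|w\|_{L^2}$ and $C\|\nabla\mathbf u\|_{L^2}$, both already controlled through Proposition \ref{uL2} (up to $\|\nabla\mathbf g\|_{L^2}$). The delicate term is $A^{-1}\nabla^{\perp}(\mathbf u\cdot\nabla w)$, which carries a derivative of $w$ that we have no way to estimate. The key observation is that, since $\nabla\cdot\mathbf u=0$, one has $\mathbf u\cdot\nabla w=\nabla\cdot(w\mathbf u)$ and $\nabla^{\perp}(\cdot)=\nabla\cdot((\cdot)J)$ with $J$ the rotation matrix, so that $\nabla^{\perp}(\mathbf u\cdot\nabla w)=\nabla\cdot F$ with $F_{ij}=\partial_k H^k_{ij}$ and $H^k_{ij}=w\,u_k J_{ij}$. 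Crucially $H\in W_0^{1,q}(\Omega)$ precisely because $\mathbf u$ vanishes on $\partial\Omega$, so the endpoint Stokes bound \eqref{Stokes3} applies and gives $\|A^{-1}\nabla^{\perp}(\mathbf u\cdot\nabla w)\|_{L^2}\le C\|w\mathbf u\|_{L^2}\le C\|w\|_{L^4}\|\mathbf u\|_{L^4}$, with no derivative of $w$. Finally the convection term obeys $\|\mathbf u\cdot\nabla\mathbf u\|_{L^2}\le\|\mathbf u\|_{L^4}\|\nabla\mathbf u\|_{L^4}$; writing $\nabla\mathbf u=\nabla\mathbf g+(\nu+\kappa)\nabla\mathbf v$, using $\|\nabla\mathbf v\|_{L^4}\le C\|w\|_{L^4}$ and the Gagliardo--Nirenberg bound $\|\nabla\mathbf g\|_{L^4}\le C\|\Delta\mathbf g\|_{L^2}^{3/4}+C$ from Corollary \ref{C1}(2) (here $\|\mathbf g\|_{L^2}\le C$), this is the term that generates the two-dimensional enstrophy structure.

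Inserting these bounds and using Young's inequality to absorb all factors of $\|A\mathbf g\|_{L^2}$ into the dissipation, I expect an inequality of the form
\beno
\f{d}{dt}\|\nabla\mathbf g\|_{L^2}^2+c\|\Delta\mathbf g\|_{L^2}^2\le C\|\nabla\mathbf g\|_{L^2}^4+C\big(1+\|\nabla\mathbf g\|_{L^2}\big)\|w\|_{L^4}^2+C\|\nabla\mathbf g\|_{L^2}^2+C.
\eeno
For $w$ I would keep the bound \emph{linear}: testing the second equation of \eqref{eq1} against $w|w|^2$, the transport term vanishes by incompressibility and no-slip, the damping has a favorable sign, and one is left with $\f{d}{dt}\|w\|_{L^4}\le C\|\nabla\mathbf u\|_{L^4}\le C\|\Delta\mathbf g\|_{L^2}^{3/4}+C\|w\|_{L^4}+C$. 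Squaring and applying Young's inequality to $\|w\|_{L^4}\|\Delta\mathbf g\|_{L^2}^{3/4}$ gives $\f{d}{dt}\|w\|_{L^4}^2\le\epsilon\|\Delta\mathbf g\|_{L^2}^2+C\|w\|_{L^4}^2+C$, whose $\|\Delta\mathbf g\|_{L^2}^2$ is absorbed by the dissipation above.

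The closure is then a single Grönwall argument for $\mathcal E(t)=\|\nabla\mathbf g\|_{L^2}^2+\|w\|_{L^4}^2$. Adding the two inequalities and bounding $\|\nabla\mathbf g\|_{L^2}^4\le\|\nabla\mathbf g\|_{L^2}^2\,\mathcal E$ and $(1+\|\nabla\mathbf g\|_{L^2})\|w\|_{L^4}^2\le(1+\|\nabla\mathbf g\|_{L^2})\mathcal E$ yields $\mathcal E'\le C(1+\|\nabla\mathbf g\|_{L^2}^2+\|\nabla\mathbf g\|_{L^2})\mathcal E+C$. The multiplier is integrable in time, since $\|\nabla\mathbf g\|_{L^2}\le\|\nabla\mathbf u\|_{L^2}+C\|w\|_{L^2}$ and Proposition \ref{uL2} already provides $\int_0^T\|\nabla\mathbf g\|_{L^2}^2\dd t<\infty$; Grönwall then bounds $\mathcal E$ on $[0,T]$ and, upon reintegration, controls $\int_0^T\|\Delta\mathbf g\|_{L^2}^2\dd t$ and $\int_0^T\|w\|_{L^4}^2\dd t$, which is exactly the asserted estimate. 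The main obstacle, and the place where the whole construction pays off, is the term $A^{-1}\nabla^{\perp}(\mathbf u\cdot\nabla w)$: it is precisely the absence of any viscous control on $w$ that forces the double-divergence rewriting together with the no-slip condition and the endpoint Stokes bound \eqref{Stokes3}. A secondary subtlety is that the convection term produces the super-linear quantity $\|\nabla\mathbf g\|_{L^2}^4$, which does not close by a naive Grönwall and is instead tamed by the two-dimensional device of treating one factor of $\|\nabla\mathbf g\|_{L^2}^2$ as an $L^1_t$ weight.
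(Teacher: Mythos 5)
Your proposal follows essentially the same route as the paper: the same energy functional $\|\nabla\mathbf g\|_{L^2}^2+\|w\|_{L^4}^2$, the same decomposition of $\mathbf Q$ into four pieces, the same key step of rewriting $\nabla^{\perp}(\mathbf u\cdot\nabla w)$ as a double divergence of $w\mathbf u$ (vanishing on $\partial\Omega$ by no-slip) so that the endpoint Stokes bound \eqref{Stokes3} removes the derivative from $w$, and the same Gr\"onwall closure with the $L^1_t$ weight supplied by Proposition \ref{uL2}. The only deviations are cosmetic --- testing with the Stokes operator $A\mathbf g$ rather than $-\Delta\mathbf g$ (arguably cleaner regarding the pressure), and using Corollary \ref{C1}(2) on $\mathbf g$ rather than Corollary \ref{C1}(1) on $\nabla\mathbf g$ for the $\|\nabla\mathbf g\|_{L^4}$ interpolation --- and neither affects correctness.
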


\begin{proof}
Taking inner product of $\eqref{mathg}^1$ with $-\Delta \mathbf g$, and applying the boundary
condition ${\mathbf g}|_{\p \Omega}=0$, the Cauchy-Schwarz inequality, we have
\ben
&&\f12\f{d}{dt}\|\nabla\mathbf g\|_{L^2(\Omega)}^2+(\nu+\kappa)\|\Delta {\mathbf g}\|_{L^2(\Omega)}^2=-\int_{\Omega}{\mathbf Q}\cdot{\Delta}{\mathbf g}\,dx\notag\\
&\leq&\|\mathbf Q\|_{L^2(\Omega)}\|\Delta{\mathbf g}\|_{L^2(\Omega)}\notag\\
&\leq&\f{(\nu+\kappa)}8\|\Delta {\mathbf g}\|_{L^2(\Omega)}^{2}+C\|\mathbf Q\|_{L^2(\Omega)}^2,
\label{A102}
\een
with
\ben
\|\mathbf Q\|_{L^2(\Omega)}^2&\leq& C\|{\mathbf u}\cdot\nabla{\mathbf u}\|_{L^2(\Omega)}^2+C\|A^{-1}\nabla^{\perp}({\mathbf u}\cdot\nabla{ w})\|_{L^2(\Omega)}^2\notag\\
&&+C\|A^{-1}\nabla^{\perp}(\nabla^{\perp}\cdot {\mathbf u})\|_{L^2(\Omega)}^2+C\|{\mathbf v}\|_{L^2(\Omega)}^2\notag\\
&=&\sum\limits_{i=1}^4 I^i.
\label{A103}
\een

Next, we will estimate the four terms one by one. By applying H\"{o}lder inequality, Corollary {\ref{C1}}, \eqref{keyestimates}, Lemma \ref{elliptic} and Young inequality, it follows that
\ben
I_1&\leq& C\|{\mathbf u}\cdot\nabla{\mathbf g}\|_{L^2(\Omega)}^2+C\|{\mathbf u}\cdot\nabla{\mathbf v}\|_{L^2(\Omega)}^2\notag\\
&\leq& C\|{\mathbf u}\|_{L^4(\Omega)}^2\|\nabla{\mathbf g}\|_{L^4(\Omega)}^2+C\|{\mathbf u}\cdot\nabla{\mathbf v}\|_{L^2(\Omega)}^2\notag\\
&\leq& C\|{\mathbf u}\|_{L^2(\Omega)}\|\nabla{\mathbf u}\|_{L^2(\Omega)}\|\nabla{\mathbf g}\|_{L^2(\Omega)}^2+C\|{\mathbf u}\|_{L^2(\Omega)}\|\nabla{\mathbf u}\|_{L^2(\Omega)}\|\nabla{\mathbf g}\|_{L^2(\Omega)}\|\Delta{\mathbf g}\|_{L^2(\Omega)}\notag\\
&&+C\|{\mathbf u}\|_{L^2(\Omega)}\|\nabla{\mathbf u}\|_{L^2(\Omega)}\|\nabla{\mathbf v}\|_{L^4(\Omega)}^2\notag\\
&\leq& C\|{\mathbf u}\|_{L^2(\Omega)}\|\nabla{\mathbf u}\|_{L^2(\Omega)}\|\nabla{\mathbf g}\|_{L^2(\Omega)}^2+C\|{\mathbf u}\|_{L^2(\Omega)}\|\nabla{\mathbf u}\|_{L^2(\Omega)}\|\nabla{\mathbf g}\|_{L^2(\Omega)}\|\Delta{\mathbf g}\|_{L^2(\Omega)}\notag\\
&&+C\|{\mathbf u}\|_{L^2(\Omega)}\|\nabla{\mathbf u}\|_{L^2(\Omega)}\|w\|_{L^4(\Omega)}^2\notag\\
&\leq&\f{(\nu+\kappa)}8\|\Delta {\mathbf g}\|_{L^2(\Omega)}^{2}+C(\|{\mathbf u}\|_{L^2(\Omega)}\|\nabla{\mathbf u}\|_{L^2(\Omega)}+\|{\mathbf u}\|_{L^2(\Omega)}^2\|\nabla{\mathbf u}\|_{L^2(\Omega)}^2)\|\nabla{\mathbf g}\|_{L^2(\Omega)}^2\notag\\
&&+C\|{\mathbf u}\|_{L^2(\Omega)}\|\nabla{\mathbf u}\|_{L^2(\Omega)}\|w\|_{L^4(\Omega)}^2.
\label{A104}
\een
Regarding the left terms, from the incompressible condition $\nabla\cdot{\mathbf u}=0$ and the boundary condition ${\mathbf u}|_{\p\Omega}=0$, we can infer that ${\mathbf u}\cdot\nabla w=\nabla\cdot({\mathbf u}w)$ and ${\mathbf u}w|_{\p\Omega}=0$. Therefore, by using H\"{o}lder inequality, Corollary {\ref{C1}} and
Lemma \ref{stokes}, we obtain
\ben
&&I_2+I_3+I_4\notag\\
&\leq& C\|A^{-1}\nabla^{\perp}\nabla({\mathbf u}{w})\|_{L^2(\Omega)}^2+C\|A^{-1}\nabla^{\perp}(\nabla^{\perp}\cdot {\mathbf u})\|_{L^2(\Omega)}^2+C\|{\mathbf v}\|_{L^2(\Omega)}^2\notag\\
&\leq& C\|{\mathbf u}{w}\|_{L^2(\Omega)}^2+C\|{\mathbf u}\|_{L^2(\Omega)}^2+C\|w\|_{L^2(\Omega)}^2\notag\\
&\leq& C\|{\mathbf u}\|_{L^4(\Omega)}^2\|w\|_{L^4(\Omega)}^2+C\|{\mathbf u}\|_{L^2(\Omega)}^2+C\|w\|_{L^2(\Omega)}^2\notag\\
&\leq& C\|{\mathbf u}\|_{L^2(\Omega)}\|\nabla{\mathbf u}\|_{L^2(\Omega)}\|w\|_{L^4(\Omega)}^2+C(\|{\mathbf u}\|_{L^2(\Omega)}^2+\|w\|_{L^2(\Omega)}^2).
\label{A105}
\een
Finally, we add up the estimates from \eqref{A102} to \eqref{A105}, it yields that
\ben
&&\f12\f{d}{dt}\|\nabla\mathbf g\|_{L^2(\Omega)}^2+\f{3(\nu+\kappa)}4\|\Delta {\mathbf g}\|_{L^2(\Omega)}^2\notag\\
&\leq&C(\|{\mathbf u}\|_{L^2(\Omega)}\|\nabla{\mathbf u}\|_{L^2(\Omega)}+\|{\mathbf u}\|_{L^2(\Omega)}^2\|\nabla{\mathbf u}\|_{L^2(\Omega)}^2)\|\nabla{\mathbf g}\|_{L^2(\Omega)}^2\notag\\
&&+C\|{\mathbf u}\|_{L^2(\Omega)}\|\nabla{\mathbf u}\|_{L^2(\Omega)}\|w\|_{L^4(\Omega)}^2.
\label{A106}
\een

Clearly, \eqref{A106} is not a closed estimate still because the bound of $\|w\|_{L^4(\Omega)}$ is unknown. However, we discover that, the
estimate of $\|w\|_{L^4(\Omega)}$ can be bounded in turn by $\|\nabla\mathbf g\|_{L^2(\Omega)}$ and $\|\Delta {\mathbf g}\|_{L^2(\Omega)}$. This motivates us to search for the closed estimates of $\|\nabla\mathbf g\|_{L^\infty (0,T;L^2(\Omega))}^2+\|w\|_{L^\infty(0,T;L^4(\Omega))}^2$. To start with, by multiplying $\eqref{eq1}^2$ with $|w|^3w$ and integrating on $\Omega$, we have
\ben
&&\f14\f{d}{dt}\|w\|_{L^4(\Omega)}^4+4\kappa\|w\|_{L^4(\Omega)}^4=2\kappa\int_{\Omega}\nabla^{\perp}\cdot {\mathbf u}{|w|^3w}\,dx\notag\\
&\leq& C\|\nabla{\mathbf u}\|_{L^4(\Omega)}\|w\|_{L^4(\Omega)}^3\notag\\
&\leq& C(\|\nabla{\mathbf g}\|_{L^4(\Omega)}+\|\nabla{\mathbf v}\|_{L^4(\Omega)})\|w\|_{L^4(\Omega)}^3\notag\\
&\leq& C(\|\nabla{\mathbf g}\|_{L^2(\Omega)}+\|\nabla{\mathbf g}\|_{L^2(\Omega)}^{\f12}\|\Delta{\mathbf g}\|_{L^2(\Omega)}^{\f12}+\|w\|_{L^4(\Omega)})\|w\|_{L^4(\Omega)}^3,
\label{A107}
\een
which further implies, after dividing $\|w\|_{L^4(\Omega)}^2$ on both sides, that
\ben
&&\f12\f{d}{dt}\|w\|_{L^4(\Omega)}^2+4\kappa\|w\|_{L^4(\Omega)}^2\notag\\
&\leq& C(\|\nabla{\mathbf g}\|_{L^2(\Omega)}+\|\nabla{\mathbf g}\|_{L^2(\Omega)}^{\f12}\|\Delta{\mathbf g}\|_{L^2(\Omega)}^{\f12}+\|w\|_{L^4(\Omega)})\|w\|_{L^4(\Omega)}\notag\\
&\leq& \f{(\nu+\kappa)}4\|\Delta{\mathbf g}\|_{L^2(\Omega)}^{2}+C(\|\nabla{\mathbf g}\|_{L^2(\Omega)}^2+\|w\|_{L^4(\Omega)}^2).
\label{A108}
\een

Subsequently, by summing up \eqref{A106}-\eqref{A108} and some basic calculations, we finally obtain
\ben
&&\f12\f{d}{dt}(\|\nabla\mathbf g\|_{L^2(\Omega)}^2+\|w\|_{L^4(\Omega)}^2)+\f{(\nu+\kappa)}2\|\Delta {\mathbf g}\|_{L^2(\Omega)}^2+4\kappa\|w\|_{L^4(\Omega)}^2\notag\\
&\leq&C(1+\|{\mathbf u}\|_{L^2(\Omega)}^2\|\nabla{\mathbf u}\|_{L^2(\Omega)}^2)(\|\nabla{\mathbf g}\|_{L^2(\Omega)}^2+\|w\|_{L^4(\Omega)}^2).
\label{A109}
\een
This together with Gronwall's inequality and \eqref{A1} then yield the following bound
\ben
&&\|\nabla{\mathbf g}\|_{L^2(\Omega)}^2+\|w\|_{L^4(\Omega)}^2
+(\nu+\kappa) \,\int_0^T\|\Delta {\mathbf g}\|_{L^2(\Omega)}^2dt + 8\kappa\,\int_0^T \|\nabla w\|_{L^4(\Omega)}^2dt \notag \\
&\le& C_1\,e^{C_2 T}\,(\|\nabla {\mathbf u}_0,w_0\|^2_{L^2(\Omega)}+\|w_0\|^2_{L^4(\Omega)})\equiv A_2(T), \label{A2}
\een
where $C_1=C_1(\nu,\kappa)$, $C_2=C_2(\nu,\kappa,\|{\mathbf u}_0, w_0\|_{L^2(\Omega)})$. This completes the proof of Lemma \ref{gH1}.
\end{proof}

\vskip .1in
Although we have derived the estimate of $\|{\mathbf u}\|_{L^\infty(0,T;H^1(\Omega))}$, to prove the global existence of solutions, we still need the estimate of $\|{\mathbf u}\|_{L^2(0,T;H^2(\Omega))}$. Therefore, even with the help of estimate $\|{\mathbf g}\|_{L^2(0,T;H^2(\Omega))}$, we still need the global bound of $\|{\mathbf v}\|_{L^2(0,T;H^2(\Omega))}$. Namely, we should prove that $\|{w}\|_{L^2(0,T;H^1(\Omega))}$ is globally bound according to Lemma \ref{stokes}. To achieve this, we firstly establish the bound of $\|w\|_{L^\infty(0,T; L^q(\Omega))}$.

\begin{proposition}\label{w-Lp}
In addition to the conditions in Lemma \ref{gH1}, if we further assume
$w_0 \in L^p (\Omega)$ for any $2\leq p\leq\infty$, then the micro-rotation $w$ obeys the global bound
\begin{equation*}
\|w\|_{L^\infty(0,T; L^q(\Omega))}\leq C,
\end{equation*}
where $C$ depends only on $T$, $\|{\mathbf u}_0\|_{H^1(\Omega)}$ and $\|{w}_0\|_{L^{q}(\Omega)}$.
\end{proposition}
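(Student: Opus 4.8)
The plan is to treat the second equation of \eqref{eq1} as a damped transport equation for $w$, namely $\partial_t w+4\kappa w+\mathbf u\cdot\nabla w=2\kappa\nabla^{\perp}\cdot\mathbf u$, and to propagate $L^p$ norms directly since there is no diffusion in $w$ to exploit. First I would test $\eqref{eq1}^2$ against $|w|^{p-2}w$ and integrate over $\Omega$. The convection term vanishes: because $\nabla\cdot\mathbf u=0$ and $\mathbf u|_{\partial\Omega}=0$, one has $\int_{\Omega}(\mathbf u\cdot\nabla w)|w|^{p-2}w\,dx=\tfrac1p\int_{\Omega}\mathbf u\cdot\nabla|w|^p\,dx=0$. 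The damping term contributes $4\kappa\|w\|_{L^p(\Omega)}^p$ with a favorable sign, while H\"older's inequality bounds the source by $2\kappa\|\nabla^{\perp}\cdot\mathbf u\|_{L^p(\Omega)}\|w\|_{L^p(\Omega)}^{p-1}$. Dividing by $\|w\|_{L^p(\Omega)}^{p-1}$ then yields the scalar differential inequality
\[
\frac{d}{dt}\|w\|_{L^p(\Omega)}+4\kappa\|w\|_{L^p(\Omega)}\le C\|\nabla\mathbf u\|_{L^p(\Omega)}.
\]

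Next I would reduce matters to the time-integrability of $\|\nabla\mathbf u\|_{L^p(\Omega)}$. Using the decomposition $\mathbf u=\mathbf g+(\nu+\kappa)\mathbf v$ and the Stokes bound \eqref{keyestimates}, which gives $\|\nabla\mathbf v\|_{L^p(\Omega)}\le C\|w\|_{L^p(\Omega)}$, this contribution feeds back into $\|w\|_{L^p(\Omega)}$ and is harmless after invoking Gronwall. For $\nabla\mathbf g$, Lemma \ref{gH1} provides $\mathbf g\in L^\infty(0,T;H^1(\Omega))$ together with $\Delta\mathbf g\in L^2(0,T;L^2(\Omega))$, whence $\mathbf g\in L^2(0,T;H^2(\Omega))$ by the elliptic regularity of Lemma \ref{elliptic} applied componentwise to $-\Delta\mathbf g$. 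For every finite $p$ the two-dimensional embedding $H^1(\Omega)\hookrightarrow L^p(\Omega)$ gives $\nabla\mathbf g\in L^2(0,T;L^p(\Omega))$, so $\int_0^T\|\nabla\mathbf g\|_{L^p(\Omega)}\,dt\le T^{1/2}\|\nabla\mathbf g\|_{L^2(0,T;L^p(\Omega))}<\infty$. Feeding this into the differential inequality and applying Gronwall then produces the claimed bound $\|w\|_{L^\infty(0,T;L^p(\Omega))}\le C$ for every finite $p$, with $C$ depending only on $T$, $\|\mathbf u_0\|_{H^1(\Omega)}$ and $\|w_0\|_{L^p(\Omega)}$.

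The main obstacle is the endpoint $p=\infty$, precisely the borderline case where $H^1(\Omega)\not\hookrightarrow L^\infty(\Omega)$ in two dimensions; consequently the finite-$p$ argument does not survive the limit $p\to\infty$, as the Gagliardo--Nirenberg constants degrade. Here I would instead integrate $w$ along the flow $\dot X=\mathbf u(X,t)$ of the velocity, obtaining $\|w(t)\|_{L^\infty(\Omega)}\le\|w_0\|_{L^\infty(\Omega)}+C\int_0^t\|\nabla\mathbf u\|_{L^\infty(\Omega)}\,ds$, which reduces the problem to an $L^\infty$ gradient bound for $\mathbf u$. The delicate point is $\|\nabla\mathbf v\|_{L^\infty(\Omega)}$, for which the logarithmic Stokes estimate of Lemma \ref{newstokes} gives $\|\nabla\mathbf v\|_{L^\infty(\Omega)}\le C(1+\|w\|_{L^\infty(\Omega)})\ln(e+\|\nabla w\|_{L^q(\Omega)})$; I would then close the $L^\infty$ bound through an Osgood (log-Gronwall) argument, once the auxiliary controls on $\|\nabla w\|_{L^q(\Omega)}$ and on $\|\nabla\mathbf g\|_{L^\infty(\Omega)}$ (the latter via the time-dependent Stokes estimate of Lemma \ref{timestokes} applied to \eqref{mathg}) are secured. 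This coupling back to one derivative of $w$ is what makes the endpoint the genuinely hard part.
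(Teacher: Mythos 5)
For finite exponents your argument is exactly the paper's: test $\eqref{eq1}^2$ against $|w|^{q-2}w$, kill the convection term using $\nabla\cdot{\mathbf u}=0$ and ${\mathbf u}|_{\p\Omega}=0$, obtain $\tfrac{d}{dt}\|w\|_{L^q(\Omega)}+4\kappa\|w\|_{L^q(\Omega)}\le 2\kappa\|\nabla{\mathbf u}\|_{L^q(\Omega)}$, split ${\mathbf u}={\mathbf g}+(\nu+\kappa){\mathbf v}$, absorb $\|\nabla{\mathbf v}\|_{L^q(\Omega)}\le C\|w\|_{L^q(\Omega)}$ into Gronwall, and control $\|\nabla{\mathbf g}\|_{L^q(\Omega)}$ through ${\mathbf g}\in L^2(0,T;H^2(\Omega))$ from Lemma \ref{gH1} together with $H^1(\Omega)\hookrightarrow L^q(\Omega)$. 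Your scepticism about the endpoint is also well placed: the paper disposes of $q=\infty$ by asserting that the constant is independent of $q$ and letting $q\to\infty$, while the constant it actually uses comes from $H^1(\Omega)\hookrightarrow L^q(\Omega)$ and does grow with $q$; so you have put your finger on the genuinely delicate point of this proposition.

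Your substitute for the endpoint, however, contains a real gap. The characteristics bound $\|w(t)\|_{L^\infty(\Omega)}\le\|w_0\|_{L^\infty(\Omega)}+2\kappa\int_0^t\|\nabla{\mathbf u}\|_{L^\infty(\Omega)}\,ds$ is fine, and $\|\nabla{\mathbf g}\|_{L^\infty(\Omega)}\in L^1(0,T)$ is indeed available from Lemma \ref{gW2p} (which only consumes finite-exponent bounds on $w$). The problem is $\|\nabla{\mathbf v}\|_{L^\infty(\Omega)}$: Lemma \ref{newstokes} prices it at $C(1+\|w\|_{L^\infty(\Omega)})\ln(e+\|\nabla w\|_{L^q(\Omega)})$, and the control of $\|\nabla w\|_{L^q(\Omega)}$ is exactly Proposition \ref{nablaw-Lp}, which (i) requires the extra hypothesis $\nabla w_0\in L^q(\Omega)$, not granted by the present proposition's assumption $w_0\in L^p(\Omega)$, and (ii) is driven in the paper by the weight $\varphi(t)=(1+\|w\|_{L^\infty(\Omega)})(1+\|{\mathbf g}\|_{W^{2,q}(\Omega)})$, i.e.\ it consumes the very $L^\infty$ bound on $w$ you are trying to produce. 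So your route is either a strengthening of the hypotheses or circular, unless both quantities are estimated simultaneously; but the simultaneous system, with $X=\|w\|_{L^\infty(\Omega)}$ and $Z=\ln(e+\|\nabla w\|_{L^q(\Omega)})$, takes the form $\dot X\lesssim a(t)+(1+X)Z$, $\dot Z\lesssim a(t)+b(t)+(1+X)Z$ with $a,b\in L^1(0,T)$, which is jointly superlinear (Riccati-type in $X+Z$) and does not close by Gronwall or Osgood. ``Once the auxiliary controls are secured'' is precisely the step that is missing, and it is not secured by anything earlier in the paper's chain of estimates.
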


\begin{proof} We start with the equation of $w$, namely ${\eqref{eq1}}^2$.
For any $2\leq q<\infty$, multiplying ${\eqref{eq1}}^2$ with $|w|^{q-2}w$ and integrating on $\Omega$, we obtain
\begin{equation*}	
\frac1q\frac{d}{dt}\|w\|_{L^q(\Omega)}^q+4\kappa\|w\|_{L^q(\Omega)}^q
\leq 2\kappa\|\nabla{\mathbf u}\|_{L^q(\Omega)}\|w\|_{L^q(\Omega)}^{q-1},
\end{equation*}
i.e.,
\beno
\frac{d}{dt}\|w\|_{L^q(\Omega)}+4\kappa\|w\|_{L^q(\Omega)}\leq 2\kappa\|\nabla{\mathbf u}\|_{L^q(\Omega)}.
\eeno

Then, by employing the definition of $\mathbf g$, \eqref{keyestimates} and Sobolev embedding inequalities, we further have
\beno
&&\frac{d}{dt}\|w\|_{L^q(\Omega)}+4\kappa\|w\|_{L^q(\Omega)}\\
&\leq& C\|\nabla{\mathbf g}\|_{L^q(\Omega)}+C\|\nabla{\mathbf v}\|_{L^q(\Omega)}\\
&\leq&C\|{\mathbf g}\|_{W^{1,q}(\Omega)}+C\|w\|_{L^q(\Omega)}\\
&\leq&C\|{\mathbf g}\|_{H^{2}(\Omega)}+C\|w\|_{L^q(\Omega)}\\
&\leq&C\|{\mathbf u}\|_{L^{2}(\Omega)}+C\|w\|_{L^{2}(\Omega)}+C\|\Delta{\mathbf g}\|_{L^{2}(\Omega)}+C\|w\|_{L^q(\Omega)}
\eeno
which, according to Gronwall's inequality, Proposition \ref{uL2} and Lemma \ref{gH1} implies
\beno
&&\|w\|_{L^q(\Omega)}+4\kappa\int_0^T\|w\|_{L^q(\Omega)}dt\\
&\leq& e^{CT}\left[\|w_0\|_{L^q(\Omega)}+\int_0^T\left(\|\mathbf u\|_{L^2(\Omega)}+\|w\|_{L^2(\Omega)}+\|\Delta{\mathbf g}\|_{L^2(\Omega)}\right)dt\right]\\
&\leq& C(T).
\eeno

Noting that $C$ is independent of $p$, we then derive, by letting $p\rightarrow \infty$,
\beno
&&\|w\|_{L^\infty(\Omega)}+4\kappa\int_0^T\|w\|_{L^\infty(\Omega)}dt\\
&\leq& e^{CT}\left[\|w_0\|_{L^\infty(\Omega)}+\int_0^T\left(\|\mathbf u\|_{L^2(\Omega)}+\|w\|_{L^2(\Omega)}+\|\Delta{\mathbf g}\|_{L^2(\Omega)}\right)dt\right]\\
&\leq& C(T).
\eeno
This completes the proof of Proposition \ref{w-Lp}.
\end{proof}

\vskip .1in
We now move on to the next lemma asserting the global bound for $\|\mathbf g\|_{W^{2,q}(\Omega)}$.

\begin{lemma}\label{gW2p}
Under the assumptions of  Proposition \ref{w-Lp}, if in addition,
$\mathbf {u}_0 \in H^2(\Omega)$ and $w_0 \in H^1(\Omega)$, then the inequality
\beno
&&\|{\mathbf g}\|_{L^2(0,T;W^{2,q}(\Omega))}\leq\, C
\eeno
holds for any $2<q<\infty$, where $C$ depends only on $T$, $\|{\mathbf u}_0\|_{H^2(\Omega)}$ and $\|{w}_0\|_{H^{1}(\Omega)}$.
\end{lemma}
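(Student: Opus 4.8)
The plan is to read \eqref{mathg} as a \emph{nonstationary} Stokes system for $\mathbf g$ with forcing $\mathbf Q$ and initial datum $\mathbf g_0=\mathbf u_0-(\nu+\kappa)\mathbf v_0$, and to apply the maximal regularity estimate of Lemma \ref{timestokes} with time exponent $p=2$ and the prescribed spatial exponent $q\in(2,\infty)$ (after an obvious rescaling to normalize the viscosity $(\nu+\kappa)$). This reduces the whole lemma to the single bound
\[
\|\mathbf Q\|_{L^2(0,T;L^{q}(\Omega))}\le C,
\]
since the remaining lower-order pieces $\|\mathbf g\|_{L^2(0,T;L^q)}+\|\nabla\mathbf g\|_{L^2(0,T;L^q)}$ follow at once from $\mathbf g\in L^\infty(0,T;H^1)\cap L^2(0,T;H^2)$ (Lemma \ref{gH1}) together with the 2D embedding $H^2(\Omega)\hookrightarrow W^{1,q}(\Omega)$; moreover the initial datum is admissible because $w_0\in H^1$ forces $\nabla^\perp w_0\in L^2$, hence $\mathbf v_0\in H^2$ by \eqref{Stokes1}, so that $\mathbf g_0\in H^2=W^{2,2}$ as required.

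First I would dispose of the two essentially linear pieces of $\mathbf Q$. For $-4\kappa\mathbf v$, estimate \eqref{keyestimates} gives $\|\mathbf v\|_{L^q}\le C\|w\|_{L^q}$, which is bounded in $L^\infty(0,T)$ by Proposition \ref{w-Lp}. For $2\kappa A^{-1}\nabla^\perp(\nabla^\perp\cdot\mathbf u)$, I would note $\nabla^\perp(\nabla^\perp\cdot\mathbf u)=\nabla\cdot G$ with $G$ a first-order expression in $\mathbf u$ (so $|G|\le C|\nabla\mathbf u|$), and invoke \eqref{Stokes2} to obtain $\|A^{-1}\nabla^\perp(\nabla^\perp\cdot\mathbf u)\|_{L^q}\le C\|\nabla\mathbf u\|_{L^q}$; writing $\nabla\mathbf u=\nabla\mathbf g+(\nu+\kappa)\nabla\mathbf v$ and using $\|\nabla\mathbf v\|_{L^q}\le C\|w\|_{L^q}$, this is controlled in $L^2(0,T;L^q)$ by $\|\mathbf g\|_{L^2(0,T;H^2)}$ and Proposition \ref{w-Lp}.

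Next I would treat the convective term by the split $\|\mathbf u\cdot\nabla\mathbf u\|_{L^q}\le\|\mathbf u\|_{L^{2q}}\|\nabla\mathbf u\|_{L^{2q}}$, bounding $\|\mathbf u\|_{L^{2q}}$ uniformly in time via $\mathbf u\in L^\infty(0,T;H^1)\hookrightarrow L^\infty(0,T;L^{2q})$ and $\|\nabla\mathbf u\|_{L^{2q}}\le C(\|\mathbf g\|_{H^2}+\|w\|_{L^{2q}})$, so the product lands in $L^2(0,T;L^q)$. The delicate term is the nonlocal nonlinearity $A^{-1}\nabla^\perp(\mathbf u\cdot\nabla w)$, and here I would exploit the divergence structure: using $\nabla\cdot\mathbf u=0$ and $\mathbf u|_{\partial\Omega}=0$ one has $\mathbf u\cdot\nabla w=\nabla\cdot(\mathbf u w)$ with $\mathbf u w\in W_0^{1,q}(\Omega)$, so that $\nabla^\perp(\mathbf u\cdot\nabla w)$ takes the double-divergence form $\nabla\cdot F$ with $F_{ij}=\partial_k H^k_{ij}$, $H^k_{ij}=\pm\,u_k w\in W_0^{1,q}(\Omega)$. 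This is exactly the hypothesis of the sharp Stokes estimate \eqref{Stokes3}, which gives $\|A^{-1}\nabla^\perp(\mathbf u\cdot\nabla w)\|_{L^q}\le C\|\mathbf u w\|_{L^q}\le C\|\mathbf u\|_{L^{2q}}\|w\|_{L^{2q}}$, again bounded uniformly in $t$.

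The hard part, and the reason the full strength of Lemma \ref{stokes} is needed, is precisely this last term: a naive estimate would cost two derivatives on $\mathbf u w$, which are not available in $L^2(0,T)$, whereas \eqref{Stokes3} absorbs both derivatives into $A^{-1}$ and leaves only the zeroth-order quantity $\|\mathbf u w\|_{L^q}$, finite thanks to the $L^\infty(0,T;L^p)$ control of $w$ furnished by Proposition \ref{w-Lp} and the embedding $H^1\hookrightarrow L^{2q}$ for $\mathbf u$. Collecting the four estimates yields $\|\mathbf Q\|_{L^2(0,T;L^q)}\le C$, and Lemma \ref{timestokes} together with the lower-order bounds above then closes the argument.
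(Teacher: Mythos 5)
Your proposal is correct and follows essentially the same route as the paper: apply the maximal regularity estimate of Lemma \ref{timestokes} to \eqref{mathg} with $p=2$, control $\|\mathbf g_0\|_{H^2}$ via \eqref{Stokes1}, and reduce everything to $\|\mathbf Q\|_{L^2(0,T;L^q)}\le C$, which you bound term by term using \eqref{keyestimates}, the divergence structure $\mathbf u\cdot\nabla w=\nabla\cdot(\mathbf u w)$ with \eqref{Stokes3}, the splitting $\nabla\mathbf u=\nabla\mathbf g+(\nu+\kappa)\nabla\mathbf v$, Lemma \ref{gH1} and Proposition \ref{w-Lp}. The only (immaterial) deviation is that the paper estimates $A^{-1}\nabla^{\perp}(\nabla^{\perp}\cdot\mathbf u)$ by $C\|\mathbf u\|_{L^q}$ via the double-divergence form, whereas you settle for $C\|\nabla\mathbf u\|_{L^q}$ via \eqref{Stokes2}; both close the argument.
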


\begin{proof}
Initially, by applying Lemma \ref{timestokes} to \eqref{mathg} and Lemma \ref{stokes}, it is clear that
\ben
&&\|\nabla^2\mathbf g\|_{L^2(0,T;L^q(\Omega))}\leq C(\|\mathbf Q\|_{L^2(0,T;L^q(\Omega))}+\|{\mathbf g}_0\|_{H^2(\Omega)})\notag\\
&\leq&C(\|\mathbf Q\|_{L^2(0,T;L^q(\Omega))}+\|{\mathbf u}_0\|_{H^2(\Omega)}+\|{\mathbf v}_0\|_{H^2(\Omega)})\notag\\
&\leq&C(\|\mathbf Q\|_{L^2(0,T;L^q(\Omega))}+\|{\mathbf u}_0\|_{H^2(\Omega)}+\|{w}_0\|_{H^1(\Omega)})
\label{A110}
\een
with
\ben
\|\mathbf Q\|_{L^2(0,T;L^q(\Omega))}&\leq& \|{\mathbf u}\cdot\nabla{\mathbf u}\|_{L^2(0,T;L^q(\Omega))}+\|A^{-1}\nabla^{\perp}({\mathbf u}\cdot\nabla{ w})\|_{L^2(0,T;L^q(\Omega))}\notag\\
&&+\|A^{-1}\nabla^{\perp}(\nabla^{\perp}\cdot {\mathbf u})\|_{L^2(0,T;L^q(\Omega))}+\|{\mathbf v}\|_{L^2(0,T;L^q(\Omega))}\notag\\
&=&\sum\limits_{i=1}^4 I^i.
\label{A111}
\een

For the first term, by employing H\"{o}lder inequality, Sobolev embedding inequalities, \eqref{keyestimates}, Proposition \ref{uL2}, Lemma \ref{gH1} and Proposition \ref{w-Lp}, we have
\ben
I_1&\leq& \|{\mathbf u}\|_{L^\infty(0,T;L^{2q}(\Omega))}\|\nabla{\mathbf u}\|_{L^2(0,T;L^{2q}(\Omega))}\notag\\
&\leq& C\|{\mathbf u}\|_{L^\infty(0,T;H^{1}(\Omega))}(\|\nabla{\mathbf g}\|_{L^2(0,T;L^{2q}(\Omega))}+\|\nabla{\mathbf v}\|_{L^2(0,T;L^{2q}(\Omega))})\notag\\
&\leq& C(\|{\mathbf u}\|_{L^\infty(0,T;L^{2}(\Omega))}+\|\nabla{\mathbf u}\|_{L^\infty(0,T;L^{2}(\Omega))})(\|{\mathbf g}\|_{L^2(0,T;H^{2}(\Omega))}+\|w\|_{L^2(0,T;L^{2q}(\Omega))})\notag\\
&\leq& C(\|\nabla{\mathbf g}\|_{L^\infty(0,T;L^{2}(\Omega))}+\|\nabla{\mathbf v}\|_{L^\infty(0,T;L^{2}(\Omega))})(\|{\mathbf g}\|_{L^2(0,T;L^{2}(\Omega))}+\|\Delta{\mathbf g}\|_{L^2(0,T;L^{2}(\Omega))})\notag\\
&\leq& C\|w\|_{L^\infty(0,T;L^{2}(\Omega))}(\|{\mathbf u}\|_{L^2(0,T;L^{2}(\Omega))}+\|w\|_{L^2(0,T;L^{2}(\Omega))})\notag\\
&\leq& C(T)
\label{A112}
\een
As for the reminding terms, by using the equality ${\mathbf u}\cdot\nabla w=\nabla\cdot({\mathbf u}w)$ and same tools as estimating \eqref{A112}, it follows that
\ben
&&I_2+I_3+I_4\notag\\
&\leq& \|A^{-1}\nabla^{\perp}\nabla({\mathbf u}{w})\|_{L^2(0,T;L^{q}(\Omega))}+\|A^{-1}\nabla^{\perp}(\nabla^{\perp}\cdot {\mathbf u})\|_{L^2(0,T;L^{q}(\Omega))}+\|{\mathbf v}\|_{L^2(0,T;L^{q}(\Omega))}\notag\\
&\leq& C\|{\mathbf u}{w}\|_{L^2(0,T;L^{q}(\Omega))}+C\|{\mathbf u}\|_{L^2(0,T;L^{q}(\Omega))}+C\|w\|_{L^2(0,T;L^{q}(\Omega))}\notag\\
&\leq& C\|{\mathbf u}\|_{L^2(0,T;L^{2q}(\Omega))}\|{w}\|_{L^\infty(0,T;L^{2q}(\Omega))}+C\|{\mathbf u}\|_{L^2(0,T;H^{1}(\Omega))}+C\|w\|_{L^2(0,T;L^{q}(\Omega))}\notag\\
&\leq& C\|{\mathbf u}\|_{L^2(0,T;H^{1}(\Omega))}\|{w}\|_{L^\infty(0,T;L^{2q}(\Omega))}+C\|{\mathbf u}\|_{L^2(0,T;H^{1}(\Omega))}+C\|w\|_{L^2(0,T;L^{q}(\Omega))}\notag\\
&\leq& C(T).
\label{A113}
\een

Thus, through summing up the estimates from \eqref{A110} to \eqref{A113} and applying Proposition \ref{uL2} again, we finally prove that $\|\mathbf g\|_{L^2(0,T;W^{2,q}(\Omega))}\leq C(T)$.
\end{proof}

\vskip .1in
Finally, to guarantee the global existence and uniqueness of weak solutions both, we further need the global bound for $\|\nabla w\|_{L^\infty(0,T; L^4(\Omega))}$. And now, we get to work on it.

\begin{proposition}\label{nablaw-Lp}
In addition to the conditions in Lemma \ref{gW2p}, we further assume $\nabla w_0\in L^{q}(\Omega)$ for any $2<q<\infty$, we then derive the global bound
\begin{equation*}
\|\nabla w\|_{L^\infty(0,T; L^q(\Omega))}\leq C,
\end{equation*}
where $C$ depends only on $T$, $\|{\mathbf u}_0\|_{H^2(\Omega)}$, $\|{w}_0\|_{H^{1}(\Omega)}$ and $\|\nabla{w}_0\|_{L^{q}(\Omega)}$.
\end{proposition}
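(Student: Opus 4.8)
The plan is to derive a differential inequality for $\|\nabla w\|_{L^q(\Omega)}$ by differentiating the transport equation for $w$ (the second equation in \eqref{eq1}), and then to close it through a logarithmic, Osgood-type Gronwall argument in which the logarithm is supplied by Lemma \ref{newstokes}. First I would apply $\nabla$ to the $w$-equation, test the result against $|\nabla w|^{q-2}\nabla w$, and integrate over $\Omega$. The transport contribution
\[
\int_{\Omega}(\mathbf{u}\cdot\nabla)\nabla w\cdot|\nabla w|^{q-2}\nabla w\,dx=\f1q\int_{\Omega}\mathbf{u}\cdot\nabla|\nabla w|^q\,dx
\]
vanishes after integrating by parts, thanks to $\nabla\cdot\mathbf{u}=0$ and $\mathbf{u}|_{\p\Omega}=0$. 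The source term $2\kappa\nabla(\nabla^{\perp}\cdot\mathbf{u})$ and the commutator $\nabla\mathbf{u}\cdot\nabla w$ are handled by H\"older's inequality, and after dividing by $\|\nabla w\|_{L^q(\Omega)}^{q-1}$ this produces
\[
\f{d}{dt}\|\nabla w\|_{L^q(\Omega)}+4\kappa\|\nabla w\|_{L^q(\Omega)}\leq C\|\nabla^2\mathbf{u}\|_{L^q(\Omega)}+C\|\nabla\mathbf{u}\|_{L^\infty(\Omega)}\|\nabla w\|_{L^q(\Omega)}.
\]

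Next I would control both right-hand coefficients through the decomposition $\mathbf{u}=\mathbf{g}+(\nu+\kappa)\mathbf{v}$. For the second-order term, Lemma \ref{stokes} applied to \eqref{stokesv} gives $\|\nabla^2\mathbf{v}\|_{L^q(\Omega)}\leq C\|\nabla w\|_{L^q(\Omega)}$, so that $\|\nabla^2\mathbf{u}\|_{L^q(\Omega)}\leq\|\nabla^2\mathbf{g}\|_{L^q(\Omega)}+C\|\nabla w\|_{L^q(\Omega)}$, where $\|\nabla^2\mathbf{g}\|_{L^q(\Omega)}\in L^2(0,T)$ by Lemma \ref{gW2p}. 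The delicate factor is $\|\nabla\mathbf{u}\|_{L^\infty(\Omega)}\leq\|\nabla\mathbf{g}\|_{L^\infty(\Omega)}+(\nu+\kappa)\|\nabla\mathbf{v}\|_{L^\infty(\Omega)}$. Here I bound $\|\nabla\mathbf{g}\|_{L^\infty(\Omega)}\leq C\|\mathbf{g}\|_{W^{2,q}(\Omega)}$ by the embedding $W^{2,q}\hookrightarrow W^{1,\infty}$ (valid since $q>2$), which is again an $L^2(0,T)$ quantity by Lemma \ref{gW2p}. For $\nabla\mathbf{v}$ I invoke the logarithmic estimate of Lemma \ref{newstokes} with $F=2\kappa\left(\begin{smallmatrix}0&w\\-w&0\end{smallmatrix}\right)$, obtaining $\|\nabla\mathbf{v}\|_{L^\infty(\Omega)}\leq C(1+\|w\|_{L^\infty(\Omega)})\ln(e+\|\nabla w\|_{L^q(\Omega)})$, in which the prefactor $\|w\|_{L^\infty(\Omega)}$ is already globally bounded by Proposition \ref{w-Lp}.

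Collecting these estimates and setting $y(t)=e+\|\nabla w(t)\|_{L^q(\Omega)}$, the inequality takes the form $\f{dy}{dt}\leq(a(t)+C)\,y\ln y$, where $a\in L^1(0,T)$ is assembled from $\|\nabla^2\mathbf{g}\|_{L^q(\Omega)}$ and $\|\nabla\mathbf{g}\|_{L^\infty(\Omega)}$ (using $y\geq e$ and $\ln y\geq1$ to fold the additive $\|\nabla^2\mathbf{g}\|_{L^q(\Omega)}$ term into the $y\ln y$ structure). Dividing by $y$ and reading the resulting inequality for $\ln y$ converts it into a linear Gronwall inequality for $\ln y$, which integrates to a finite bound on $[0,T]$ and hence gives $\|\nabla w\|_{L^\infty(0,T;L^q(\Omega))}\leq C$.

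The main obstacle is precisely the term $\|\nabla\mathbf{u}\|_{L^\infty(\Omega)}\|\nabla w\|_{L^q(\Omega)}$. A naive estimate of $\|\nabla\mathbf{u}\|_{L^\infty(\Omega)}$ would leave a coefficient growing linearly in $\|\nabla w\|_{L^q(\Omega)}$, producing a Riccati-type inequality and only local-in-time control; the entire argument therefore hinges on replacing that linear growth by the logarithmic factor from Lemma \ref{newstokes}, made usable by the a priori $L^\infty$-bound on $w$ from Proposition \ref{w-Lp}, so that the log-Gronwall mechanism closes globally.
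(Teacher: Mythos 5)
Your proposal is correct and follows essentially the same route as the paper: differentiate the $w$-equation and perform the $L^q$ energy estimate, split $\mathbf{u}=\mathbf{g}+(\nu+\kappa)\mathbf{v}$, control $\|\nabla\mathbf{g}\|_{L^\infty(\Omega)}$ and $\|\nabla^2\mathbf{g}\|_{L^q(\Omega)}$ via Lemma \ref{gW2p}, replace $\|\nabla\mathbf{v}\|_{L^\infty(\Omega)}$ by the logarithmic bound of Lemma \ref{newstokes} (with the prefactor tamed by Proposition \ref{w-Lp}), and close with a log-Gronwall argument. Your identification of the $\|\nabla\mathbf{u}\|_{L^\infty(\Omega)}\|\nabla w\|_{L^q(\Omega)}$ term as the crux is exactly the point of the paper's argument.
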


\begin{proof} Taking the first-order partial $\partial_i$ of  ${\eqref{eq1}}^2$ yields,
\ben\label{nablawequ}
{\p_iw}_t+4\kappa {\p_iw}+{\mathbf u}\cdot\nabla {\p_iw}+\p_i{\mathbf u}\cdot\nabla {w}=2\kappa\p_i\nabla^{\perp}\cdot {\mathbf u}.
\een
Then, for any $2< q<\infty$, multiplying \eqref{nablawequ} with $|{\p_i w}|^{q-2}{\p_i w}$, summing over $i$ and integrating on $\Omega$, we obtain
\begin{equation*}	
\frac1q\frac{d}{dt}\|\nabla w\|_{L^q(\Omega)}^q+4\kappa\|\nabla w\|_{L^q(\Omega)}^q
\leq \|\nabla{\mathbf u}\|_{L^\infty(\Omega)}\|\nabla w\|_{L^q(\Omega)}^{q}+2\kappa\|\nabla^2{\mathbf u}\|_{L^q(\Omega)}\|\nabla w\|_{L^q(\Omega)}^{q-1},
\end{equation*}
i.e.,
\beno
\frac{d}{dt}\|\nabla w\|_{L^q(\Omega)}+4\kappa\|\nabla w\|_{L^q(\Omega)}\leq \|\nabla{\mathbf u}\|_{L^\infty(\Omega)}\|\nabla w\|_{L^q(\Omega)}+2\kappa\|\nabla^2{\mathbf u}\|_{L^q(\Omega)}.
\eeno

Next, by employing Lemma \ref{newstokes} for \eqref{stokesv}, it clearly holds
\ben\label{keyestimates1}
\|\nabla{\mathbf v}\|_{L^{\infty}(\Omega)}\leq C(1+\|w\|_{L^\infty(\Omega)}){\rm ln}(e+\|\nabla{w}\|_{L^{q}(\Omega)})
\een
for any $q\in (2,\infty)$. Subsequently, by recalling the definition of $\mathbf g$, applying Lemma \ref{stokes} and Sobolev embedding inequalities, we further deduce that
\beno
&&\frac{d}{dt}\|\nabla w\|_{L^q(\Omega)}+4\kappa\|\nabla w\|_{L^q(\Omega)}\\
&\leq& \|\nabla{\mathbf u}\|_{L^\infty(\Omega)}\|\nabla w\|_{L^q(\Omega)}+2\kappa\|\nabla^2{\mathbf u}\|_{L^q(\Omega)}\\
&\leq& \|\nabla{\mathbf g}\|_{L^\infty(\Omega)}\|\nabla w\|_{L^q(\Omega)}+(\nu+\kappa)\|\nabla{\mathbf v}\|_{L^\infty(\Omega)}\|\nabla w\|_{L^q(\Omega)}\\
&&+2\kappa\|\nabla^2{\mathbf g}\|_{L^q(\Omega)}+2\kappa\|\nabla^2{\mathbf v}\|_{L^q(\Omega)}\\
&\leq& C\|{\mathbf g}\|_{W^{2,q}(\Omega)}\|\nabla w\|_{L^q(\Omega)}+C(1+\|w\|_{L^\infty(\Omega)}){\rm ln}(e+\|\nabla{w}\|_{L^{q}(\Omega)})\|\nabla w\|_{L^q(\Omega)}\\
&&+C\|{\mathbf g}\|_{W^{2,q}(\Omega)}+C\|\nabla{w}\|_{L^q(\Omega)}\\
&\leq&C\varphi(t)(1+\|\nabla{w}\|_{L^{q}(\Omega)}){\rm ln}(e+\|\nabla{w}\|_{L^{q}(\Omega)}),
\eeno
where $\varphi(t)=(1+\|w\|_{L^\infty(\Omega)})(1+\|{\mathbf g}\|_{W^{2,q}(\Omega)})$. According to Proposition \ref{w-Lp} and Lemma \ref{gW2p}, it is clear that $\varphi(t)\in L^1(0,T)$.
This, together with Gronwall's inequality yield that
\beno
\|\nabla w\|_{L^q(\Omega)}+4\kappa\int_0^T\|\nabla w\|_{L^q(\Omega)}dt\leq C(T).
\eeno
This completes the proof of Proposition \ref{nablaw-Lp}.
\end{proof}

\vskip .1in
\begin{proof}[\bf Proof of Proposition \ref{weak2}:] According to the assumptions on the initial data, Proposition \ref{uL2} and Proposition \ref{nablaw-Lp}, it is clear that $\|w\|_{L^\infty(0,T; W^{1,4}(\Omega))}\leq\, C$. Then, by definition of $\mathbf g$ and Lemma \ref{stokes}, we have
\beno
&&\|\mathbf u\|_{L^\infty(0,T;H^1(\Omega))}+\|\mathbf u\|_{L^2(0,T;W^{2,4}(\Omega))}\\
&\leq&\|\mathbf g\|_{L^\infty(0,T;H^1(\Omega))}+\|\mathbf g\|_{L^2(0,T;W^{2,4}(\Omega))}+(\nu+\kappa)\left[\|\mathbf v\|_{L^\infty(0,T;H^1(\Omega))}+\|\mathbf v\|_{L^2(0,T;W^{2,4}(\Omega))}\right]\\
&\leq&\|\mathbf g\|_{L^\infty(0,T;H^1(\Omega))}+\|\mathbf g\|_{L^2(0,T;W^{2,4}(\Omega))}+(\nu+\kappa)\left[\|w\|_{L^\infty(0,T;L^2(\Omega))}+\|w\|_{L^2(0,T;W^{1,4}(\Omega))}\right].
\eeno
The terms $\|w\|_{L^\infty(0,T;L^2(\Omega))},\,\|\mathbf g\|_{L^\infty(0,T;H^1(\Omega))}$ and $\|\mathbf g\|_{L^2(0,T;W^{2,4}(\Omega))}$ is globally bounded due to Proposition \ref{uL2}, Lemma \ref{gH1} and Lemma \ref{gW2p} respectively. To bound the term $\|w\|_{L^2(0,T;W^{1,4}(\Omega))}$, it suffices to apply Proposition \ref{uL2}, Proposition \ref{nablaw-Lp} with $q=4$ and H\"{o}lder inequality. This completes the
proof of Proposition \ref{weak2}.
\end{proof}

\vskip .3in
\section{Proof of Theorem \ref{T1}}
\label{proofT1}
\setcounter{equation}{0}

\vskip .1in
The goal of this section is to complete the proof of Theorem \ref{T1}.
To do so, we first establish the global existence of weak solutions by Schauder's fixed point theorem. Then the {\it a priori} estimates obtained in the previous section for
$\mathbf u$ and $w$ allow us to prove the uniqueness of weak solutions.

\vskip .1in
\begin{proof}[\bf Existence:]
The proof is a consequence of Schauder's fixed point theorem.
We shall only provide the sketches.

\vskip .1in
To define the functional setting, we fix $T>0$ and $R_0$ to
be specified later. For notational convenience, we
write
$$
X\equiv C(0,T; L^2(\Omega))\cap L^2(0,T; H_0^1(\Omega))
$$
with $\|g\|_X\equiv \|g\|_{C(0,T; L^2(\Omega))}^2+\|g\|_{L^2(0,T; H_0^1(\Omega))}^2$,
and define
$$
B=\{g\in X\,|\,\|g\|_X\leq R_0\}.
$$
Clearly, $B\subset X$ is closed and convex.

\vskip .1in
We fix $\epsilon\in(0, 1)$ and define a continuous map on $B$. For any ${\mathbf v}\in B$, we regularize it and the initial data $({\mathbf u}_0, w_0)$ via the
standard mollifying process,
$$
{\mathbf v}^{\epsilon}= \rho^{\epsilon}\ast {\mathbf v}, \quad {\mathbf u}_{0}^{\epsilon} = \rho^{\epsilon}\ast {\mathbf u}_0, \quad w_{0}^{\epsilon} = \rho^{\epsilon}\ast w_0,
$$
where $\rho^{\epsilon}$ is the standard mollifier. Initially, the transport equation with smooth external forcing $2\kappa{\nabla^{\perp}}\cdot {\mathbf v}^{\epsilon}$ and smooth initial data $w_0^{\epsilon}$
\begin{equation}\label{weq1}
\left\{\begin{array}{ll}
w_t+{\mathbf v}^{\epsilon}\cdot \nabla w+4\kappa w=2\kappa{\nabla^{\perp}}\cdot {\mathbf v}^{\epsilon},\\
w(x,0)=w_0^{\epsilon}(x),
\end{array}\right.
\end{equation}
has a unique solution $w^{\epsilon}$. We then solve the nonhomogeneous (linearized)
Navier-Stokes equation with smooth initial data ${\mathbf u}_0^{\epsilon}$
\begin{equation}\label{weq2}
\left\{\begin{array}{ll}
{\mathbf u}_t+{\mathbf v}^{\epsilon}\cdot\nabla {\mathbf u}-(\nu+\kappa)\Delta{\mathbf u}+\nabla\pi=-2\kappa{\nabla^{\perp}}w^{\epsilon},\\
\nabla\cdot {\mathbf u}=0,\quad {\mathbf u}|_{\p \Omega}=0,\\
{\mathbf u}(x,0)={\mathbf u}_0^{\epsilon}(x),
\end{array}\right.
\end{equation}
and denote the solution by ${\mathbf u}^{\epsilon}$. This process allows us to define the map
$$
F^{\epsilon}({\mathbf v})={\mathbf u}^{\epsilon}.
$$

We then apply Schauder's fixed point theorem to construct a sequence of approximate solutions to \eqref{eq1}-\eqref{eq20}. It suffices to show that, for any fixed $\epsilon\in(0, 1)$, $F^{\epsilon}: B\rightarrow B$
is continuous and compact. More precisely, we need to show
\begin{enumerate}
\item[(a)] $\|{\mathbf u}^{\epsilon}\|_{B}\leq R_0$;
\item[(b)] $\|{\mathbf u}^{\epsilon}\|_{C(0,T; H_0^1(\Omega))}+\|{\mathbf u}^{\epsilon}\|_{L^2(0,T; H^2(\Omega))}\leq C$;
\item[(c)] $\|F^{\epsilon}({\mathbf v}_1)-F^{\epsilon}({\mathbf v}_2)\|_{B}\leq C\|{\mathbf v}_1-{\mathbf v}_2\|_{B}$ for $C$ indepedent of $\epsilon$ and any $g_1,\,g_2\in B$.
\end{enumerate}
We verify (a) first. A simple $L^2$-estimate on (\ref{weq1}) leads to
\beno
\|w^\epsilon\|_{L^2(\Omega)}^2+4\kappa\int_0^T \|w^\epsilon\|_{L^2(\Omega)}^2dt&\leq&\|w_0^\epsilon\|_{L^2(\Omega)}^2+4\kappa\int_0^T \|\nabla{\mathbf v}^{\epsilon}\|_{L^2(\Omega)}^2dt\\
&\leq&\|w_0\|_{L^2(\Omega)}^2+4\kappa\int_0^T \|\nabla{\mathbf v}\|_{L^2(\Omega)}^2dt\\
&\leq&\|w_0\|_{L^2(\Omega)}^2+4\kappa R_0.
\eeno
Then by taking inner product of \eqref{weq2} with ${\mathbf u}^{\epsilon}$ and simple calculations, we have
\beno
&&\|{\mathbf u}^\epsilon\|^2_{L^2(\Omega)}+(\nu+\kappa)\int_0^T
\|\nabla {\mathbf u}^\epsilon\|^2_{L^2(\Omega)}dt\leq \|{\mathbf u}_0\|^2_{L^2(\Omega)} + \frac{4\kappa^2}{\nu+\kappa}\,\int_0^T \|w^\epsilon\|^2_{L^2(\Omega)}dt.
\eeno

In order for $F^\epsilon$ to map $B$ to $B$, it suffices for the
right-hand side to be bounded by $R_0$. Invoking the bounds for $\|w^\epsilon\|_{L^2}$, we obtain a condition for $T$ and $R_0$,
\ben
\|{\mathbf u}_0\|_{L^2(\Omega)}^2 + C T\,(\|w_0\|_{L^2(\Omega)}^2+R_0)\le R_0,
\label{TR0}
\een
where the constant $C$ depends only on the parameters
$\kappa$ and $\gamma$. It is not difficult to see that,
if $CT<1$ and $R_0>>\|{\mathbf u}_0\|_{L^2(\Omega)}^2+\|w_0\|_{L^2(\Omega)}^2$,
(\ref{TR0}) would hold. Similarly, we can verify (c) under the condition that $T$
is sufficiently small. Besides, (b) can be verified by the similar way as
estimating \eqref{uH1-est}. Schauder's fixed point theorem then allows
us to conclude that the existence of a solution on
a finite time interval $T$. These uniform estimates
would allow us to pass the limit to obtain a
weak solution $({\mathbf u},w)$.

\vskip .1in
We remark that the local solution obtained by Schauder's fixed
point theorem can be easily extended into a global solution via
Picard type extension theorem due to the global bounds obtained in
\eqref{uH1-est}. This allows us to obtain the desired
global weak solutions.
\end{proof}

\vskip .1in
Now, we are in the position to prove the uniqueness of weak solutions through a usual way. To be more precise, we will consider the difference
between two solutions and then establish the energy estimates for the resulting
system of the difference at the level of basic energy.
\begin{proof}[\bf Uniqueness:]
Assume $({\mathbf u},w,\pi)$ and $(\widetilde{\mathbf u},\widetilde{w},\widetilde{\pi})$  are two solutions of \eqref{eq1}-\eqref{eq20} with the regularity specified in (\ref{regclass}). Consider their difference
 $${\mathbf U}={\mathbf u}-\widetilde{\mathbf u},\,\,W=w-\widetilde{w},\,\,\Pi=\pi-\widetilde{\pi},$$ which solves the following initial-boundary value problem
\begin{equation}\label{uni}
\left\{\begin{array}{ll}
{\mathbf U}_t-(\nu+\kappa)\Delta{\mathbf U}+{\mathbf u}\cdot\nabla {\mathbf U}+{\mathbf U}\cdot\nabla \widetilde{\mathbf u}+\nabla\Pi=-2\kappa{\nabla^{\perp}}W,\vspace{1ex}\\
W_t+{\mathbf u}\cdot\nabla W+{\mathbf U}\cdot\nabla \widetilde{w}+4\kappa W=2\kappa{\nabla^{\perp}}\cdot {\mathbf U},\vspace{1ex}\\
\nabla\cdot {\mathbf U}=0,\,\,{\mathbf U}|_{\p\Omega}=0,\vspace{1ex}\\
({\mathbf U},W)(x,0)=0.
\end{array}\right.
\end{equation}

Dotting the first two equations of \eqref{uni} with $({\mathbf U},W)$ yields
\ben\label{uni-est}
&&\f12\f{d}{dt}(\|\mathbf U\|_{L^2(\Omega)}^2+\|W\|_{L^2(\Omega)}^2)+(\nu+\kappa)\|\nabla {\mathbf U}\|_{L^2(\Omega)}^2+4\kappa\|W\|_{L^2(\Omega)}^2\notag\\
&=&-\int_{\Omega}  {\mathbf U}\cdot\nabla \widetilde{\mathbf u}\cdot {\mathbf U}dx-\int_{\Omega} {\mathbf U}\cdot\nabla \widetilde{w}\cdot Wdx-2\kappa\int_{\Omega}\nabla^{\perp}W\cdot {\mathbf U}dx\\
&&+2\kappa\int_{\Omega}\nabla^{\perp}\cdot {\mathbf U}Wdx.\notag
\een
By the divergence theorem and the boundary condition ${\mathbf U}|_{\p\Omega}=0$,
\beno
&&-2\kappa\int_{\Omega}\nabla^{\perp}W\cdot {\mathbf U}dx+2\kappa\int_{\Omega}\nabla^{\perp}\cdot {\mathbf U}Wdx\\
&=&4\kappa\int_{\Omega}\nabla^{\perp}\cdot {\mathbf U}Wdx\\
&\le&\frac{(\nu+\kappa)}{4}\,\|\nabla {\mathbf U}\|_{L^2(\Omega)}^2 + C\, \|W\|_{L^2(\Omega)}^2.
\eeno
To bound the first and second term on the right side of (\ref{uni-est}), we
integrate by parts and invoke the boundary condition ${\mathbf U}|_{\p\Omega}=0$
to obtain
\begin{equation*}\begin{split}
&-\int_{\Omega}  {\mathbf U}\cdot\nabla \widetilde{\mathbf u}\cdot {\mathbf U}dx-\int_{\Omega} {\mathbf U}\cdot\nabla \widetilde{w}\cdot Wdx\\
\leq&\|\nabla \widetilde{\mathbf u}\|_{L^{2}(\Omega)}\, \|\mathbf U\|_{L^4(\Omega)}^2+\|\nabla\widetilde{w}\|_{L^{4}(\Omega)}\, \|\mathbf U\|_{L^4(\Omega)}\,\|W\|_{L^2(\Omega)}\\
\leq&C\|\nabla \widetilde{\mathbf u}\|_{L^{2}(\Omega)}\, \|\mathbf U\|_{L^2(\Omega)}\, \|\nabla{\mathbf U}\|_{L^2(\Omega)}+\|\nabla\widetilde{w}\|_{L^{4}(\Omega)}\, \|\mathbf U\|_{L^2(\Omega)}^{\f12}\, \|\nabla{\mathbf U}\|_{L^2(\Omega)}^{\f12}\,\|W\|_{L^2(\Omega)}\\
\leq& \frac{(\nu+\kappa)}{4}\, \|\nabla {\mathbf U}\|_{L^2(\Omega)}^2 + C\,
(1+\|\nabla \widetilde{\mathbf u}\|_{L^{2}(\Omega)}^2+\|\nabla\widetilde{w}\|_{L^{4}(\Omega)}^2)\, (\|\mathbf U\|^2_{L^2(\Omega)}+\|W\|^2_{L^2(\Omega)}).
\end{split}
\end{equation*}

Inserting the estimates above in \eqref{uni-est} yields
\beno
&&\frac{d}{dt}(\|\mathbf U\|_{L^2(\Omega)}^2+\|W\|_{L^2(\Omega)}^2)\\
&\leq& C\,(1+\|\nabla \widetilde{\mathbf u}\|_{L^{2}(\Omega)}^2+\|\nabla\widetilde{w}\|_{L^{4}(\Omega)}^2)\, (\|\mathbf U\|^2_{L^2(\Omega)}+\|W\|^2_{L^2(\Omega)})
\eeno
By Gronwall's inequality, we obtain
\beno
&& \|{\mathbf U}(t)\|_{L^2(\Omega)}^2+\|W(t)\|_{L^2(\Omega)}^2 \\
&\le& e^{C\int_0^t\big(1+\|\nabla \widetilde{\mathbf u}\|_{L^{2}(\Omega)}^2+\|\nabla\widetilde{w}\|_{L^{4}(\Omega)}^2\big)\,d\tau}
(\|{\mathbf U}_0\|_{L^2(\Omega)}^2+\|W_0\|_{L^2(\Omega)}^2)
\eeno
for any $t\in (0,T)$. According to Proposition \ref{uL2}, Proposition \ref{nablaw-Lp} and noting that ${\mathbf U}_0=W_0=0$,
we obtain the desired uniqueness ${\mathbf U}=W\equiv 0$. This finishes the proof of Theorem \ref{T1}.
\end{proof}

\vskip .4in
\section*{Acknowledgments}
J. Liu is supported by the Connotation Development Funds of Beijing University of Technology.  S. Wang is supported by National Natural Sciences Foundation of China (No. 11371042,
No. 11531010).

\vskip .3in


\begin{thebibliography}{10}
	
\bibitem{CP} Z. Chen, W. G. Price, {\it Decay estimates of linearized micropolar fluid flows in $\R^3$ space with
applications to $L^3$-strong solutions},  Internat. J. Engrg. Sci. 44 (2006), no. 13-14, 859-873.

\bibitem{DC} B. Dong, Z. Chen, {\it Regularity criteria of weak solutions to the three-dimensional micropolar
flows}, J. Math. Phys. 50 (2009), no. 10, 103525, 13 pp.

\bibitem{DC2} B. Dong, Z. Chen, {\it Asymptotic profiles of solutions to the 2D viscous incompressible micropolar
fluid flows}, Discrete Cont. Dyn. Sys. 23 (2009),  no. 3, 765-784.

\bibitem{DLW} B. Dong, J. Li and J. Wu, {\it Global well-posedness and large-time decay for the 2D micropolar equaitons},  J. Differential Equations  262 (2017), no. 6, 3488-3523.

\bibitem{DZ} B. Dong, Z. Zhang, {\it Global regularity of the 2D micropolar fluid flows with zero angular
viscosity}, J. Differential Equations 249 (2010), no. 1, 200-213.

\bibitem{Er} A. C. Eringen, {\it Theory of micropolar fluids}, J. Math. Mech. 16 (1966), 1-18.

\bibitem{Er2} A. C. Eringen, {\it Micropolar fluids with stretch}, Int. J. Engng. Eci. 7 (1969), 115-127.

\bibitem{evans} L. C. Evans, {\it Partial differential equations}, Grad. Stud. Math, 19, Amer. Math. Soc., Providence, RI, 1968.

\bibitem{Galdi} G. P. Galdi, {\it An introduction to the mathematical theory of Navier-Stokes equations}, Vol. I. Linearized steady problems. Springer Tracts in Natural Philosophy, 38. Springer-Verlag, New York, 1994.

\bibitem{GiSo} Y. Giga, H. Sohr, {\it Abstract $L^p$ estimates for the Cauchy problem with applications to the Navier-Stokes
equations in exterior domains}, J. Funct. Anal. 102 (1991), no. 1, 72-94.

\bibitem{GT} D. Gilbarg, N. S. Trudinger, {\it Elliptic Partial Differential Equations of Second Order}, Springer-Verlag, Berlin-Heidelberg-New York, 1977.

\bibitem{JLWY} Q. Jiu, J. Liu, J. Wu and H. Yu, {\it On the initial- and boundary-value problem for 2D micropolar equations with only angular velocity dissipation}, arXiv:1611.04236, 2016.

\bibitem{Lady0} O. A. Ladyzhenskaya, {\it The Mathematical Theory of Viscous Incompressible Flow}, London: Gordon and Breach, 1969.

\bibitem{PGL} P. G. Lemarie-Rieusset, {\it Recent Developments in the Navier-Stokes Problem, Chapman $\&$
Hall/CRC Research Notes in Mathematics Series}, CRC Press, 2002.

\bibitem{GLuk} G. Lukaszewicz, {\it Micropolar Fluids. Theory and Applications}, Model. Simul. Sci. Eng. Technol., Birkh\"{a}user, Boston, 1999.

\bibitem{MB}A. Majda, A. Bertozzi, {\it Vorticity and incompressible flow}, Cambridge Texts Appl. Math., 27, Cambridge Univ. Press, Cambridge, 2002.

\bibitem{NIR} L. Nirenberg, {\it On elliptic partial differential equations}, Ann. Scuola Norm. Sup. Pisa (3) 13 (1959), 115-162.


\bibitem{SP} P. Szopa, {\it On existence and regularity of solutions for 2-D micropolar fluid equations with periodic boundary conditions}, Math. Methods Appl. Sci. 30 (2007), no. 3, 331-346.

\bibitem{SunZhang} Y. Sun, Z. Zhang, {\it Global well-posedness for the 2D micro-macro models in the bounded domain}, Comm. Math. Phys. 303 (2011), no. 2, 361-383.

\bibitem{LX} L. Xue, {\it Wellposedness and zero microrotation viscosity limit of the 2D micropolar fluid equations}, Math. Methods Appl. Sci. 34 (2011), no. 14, 1760-1777.

\bibitem{YN} N. Yamaguchi, {\it Existence of global strong solution to the micropolar fluid system in a bounded domain}, Math. Methods Appl. Sci. 28 (2005), no. 13, 1507-1526.

\end{thebibliography}
\end{document}